\documentclass[11pt,reqno]{amsart}

\usepackage{cite,a4wide,amsmath,amssymb,dsfont,epsfig,graphicx,subfig,verbatim,multicol,enumitem,lipsum,amsaddr}
\usepackage[hang,flushmargin]{footmisc}

\theoremstyle{definition}

\newtheorem{theorem}{Theorem}
\newtheorem{proposition}[theorem]{Proposition}
\newtheorem{lemma}[theorem]{Lemma}
\newtheorem{corollary}[theorem]{Corollary}

\let\oldproofname=\proofname
\renewcommand{\proofname}{\rm\bf{\oldproofname}}

\newenvironment{proofFP}{\noindent\textbf{Proof of Theorem \ref{thm:FP}.}}{\hfill$\square$}

\newenvironment{proof2cycles}{\noindent\textbf{Proof of Theorem \ref{thm:2cycles}.}}{\hfill$\square$}

\newcommand{\Z}{\mathbb{Z}}

\DeclareMathOperator{\Fix}{Fix}
\DeclareMathOperator{\Cyc}{Cyc}

\usepackage{pgf,tikz}
\usepackage{tkz-fct}
\usepackage{pgfplots}
\usetikzlibrary{arrows,trees}


\raggedbottom

\pagestyle{plain}

\begin{document}
\title{The dynamics of one-dimensional quasi-affine maps}
\author{\small Jonathan Hoseana}
\address{\normalfont\small Center for Mathematics and Society, Department of Mathematics, Parahyangan Catholic University, Bandung 40141, Indonesia}
\email{j.hoseana@unpar.ac.id}
\date{}

\begin{abstract}
We study the dynamics of the one-dimensional quasi-affine map $x\mapsto \left\lfloor \lambda x +\mu \right\rfloor$, providing a complete description of the map's periodic points, and of the limit points of every $x\in\mathbb{R}$ under the map, for all real parameter values. Specifically, we establish the existence of regions of parameter values for which the map possesses $n$ fixed points for all $n\in\mathbb{N}_0\cup \{\infty\}$, an explicit formula for the number of 2-cycles possessed by the map, and the $\omega$-limit set of any $x\in\mathbb{R}$ under the map, which, depending on the parameter values, is either a singleton of a fixed point, a 2-cycle, $\{-\infty,\infty\}$, $\{\infty\}$, or $\{-\infty\}$.

\smallskip\noindent
\textsc{Keywords.} quasi-affine; floor function; periodic point; limit point

\smallskip\noindent\textsc{2020 MSC subject classification.} 37E99; 37C25
\end{abstract}

\maketitle

\section{Introduction}

When the orbits of a dynamical system are generated and visualised using a computer, there is always a question of whether the results provide an accurate description of the system's actual orbits, despite the computer's finite capability. In \cite{LiCorless}, for instance, the periodic orbits of the maps $x\mapsto \frac{x^2-1}{2x}$ and $x\mapsto \frac{x^3-3x}{3x^2-1}$ were computed with various levels of precision, exposing the unavoidable departure from periodicity due to round-off errors. Similarly, in \cite{HoseanaMSc}, the orbit of the so-called mean-median map \cite{ChamberlandMartelli} with initial sequence $\left(0,\frac{\sqrt{5}-1}{2},1\right)$ was computed, first with 10 and subsequently with 20 significant digits accuracy (Figure \ref{fig:mmm}), revealing remarkable discrepancy, which also originates from round-off errors. 

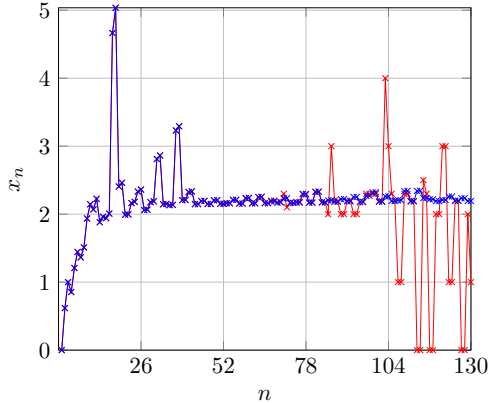
\begin{figure}[h!]
\centering\scalebox{0.8}{
\begin{tikzpicture}
\begin{axis}[
	xmin=0,
	xmax=130,
	ymin=0,
	ymax=5.0353932797412139,
	xtick={26,52,78,104,130},
	ytick={0,1,2,3,4,5},
	samples=100,
	xlabel=$n$,
	ylabel={$x_n$},
    grid=major,
    ylabel near ticks,
]
\addplot [red,mark=x] coordinates {
(1, 0.) (2, .6180339880) (3, 1.) (4, .854101966) (5, 1.208203931) (6, 1.444271908) (7, 1.364745086) (8, 1.51064312) (9, 1.93691769) (10, 2.14512162) (11, 2.06918028) (12, 2.22572144) (13, 1.8816694) (14, 1.9611962) (15, 1.9420560) (16, 2.0084272) (17, 4.6643667) (18, 5.035393) (19, 2.4065279) (20, 2.4617762) (21, 1.990870) (22, 1.996008) (23, 2.162169) (24, 2.181309) (25, 2.33212) (26, 2.36179) (27, 2.060237) (28, 2.065376) (29, 2.17608) (30, 2.18850) (31, 2.81148) (32, 2.86329) (33, 2.145019) (34, 2.150158) (35, 2.13196) (36, 2.13576) (37, 3.231) (38, 3.293) (39, 2.2062) (40, 2.2100) (41, 2.326) (42, 2.335) (43, 2.1472) (44, 2.1474) (45, 2.193) (46, 2.195) (47, 2.1496) (48, 2.1497) (49, 2.203) (50, 2.207) (51, 2.1522) (52, 2.1524) (53, 2.162) (54, 2.162) (55, 2.21) (56, 2.21) (57, 2.1551) (58, 2.1552) (59, 2.236) (60, 2.239) (61, 2.1583) (62, 2.1583) (63, 2.249) (64, 2.254) (65, 2.161) (66, 2.162) (67, 2.2) (68, 2.2) (69, 2.17) (70, 2.18) (71, 2.3) (72, 2.1) (73, 2.165) (74, 2.165) (75, 2.18) (76, 2.18) (77, 2.30) (78, 2.29) (79, 2.169) (80, 2.169) (81, 2.33) (82, 2.33) (83, 2.173) (84, 2.174) (85, 2.) (86, 3.) (87, 2.20) (88, 2.19) (89, 2.) (90, 2.) (91, 2.20) (92, 2.20) (93, 2.) (94, 2.) (95, 2.178) (96, 2.178) (97, 2.30) (98, 2.30) (99, 2.3) (100, 2.3) (101, 2.184) (102, 2.183) (103, 4.) (104, 3.) (105, 2.3) (106, 2.2) (107, 1.) (108, 1.) (109, 2.3) (110, 2.3) (111, 2.189) (112, 2.190) (113, 0.) (114, 0.) (115, 2.5) (116, 2.3) (117, 0.) (118, 0.) (119, 2.) (120, 2.) (121, 3.) (122, 3.) (123, 1.) (124, 1.) (125, 2.196) (126, 2.195) (127, 0.) (128, 0.) (129, 2.) (130, 1.)
};
\addplot [blue,mark=x] coordinates {
(1, 0.) (2, .61803398874989484820) (3, 1.) (4, .8541019662496845446) (5, 1.2082039324993690892) (6, 1.4442719099991587856) (7, 1.3647450843757886385) (8, 1.510643118126104094) (9, 1.936917696247160901) (10, 2.145121628746529991) (11, 2.069180267819676610) (12, 2.225721419696096160) (13, 1.88166945092769459) (14, 1.96119627655106474) (15, 1.94205597095124860) (16, 2.00842717907819391) (17, 4.66436694693962335) (18, 5.0353932797412139) (19, 2.40652778146262451) (20, 2.46177602678209082) (21, 1.9908695806400817) (22, 1.9960078553441694) (23, 2.1621694853491343) (24, 2.1813097909489504) (25, 2.332112577663777) (26, 2.361785881752794) (27, 2.0602362891452656) (28, 2.0653745638493533) (29, 2.176088049487525) (30, 2.188507373221549) (31, 2.811468385117805) (32, 2.863277495184877) (33, 2.1450178217627126) (34, 2.1501560964668003) (35, 2.131974383330011) (36, 2.135780087300334) (37, 3.2308714047609) (38, 3.2936655202712) (39, 2.20618561075132) (40, 2.20999131472164) (41, 2.3251536437791) (42, 2.3343913782415) (43, 2.14724967191479) (44, 2.14735347889860) (45, 2.1930026000323) (46, 2.1951306432006) (47, 2.14968913603449) (48, 2.14979294301831) (49, 2.2045770787279) (50, 2.2069127358638) (51, 2.15233621412184) (52, 2.15244002110566) (53, 2.1594165094032) (54, 2.1597796628517) (55, 2.210109331980) (56, 2.212289449635) (57, 2.15529471316063) (58, 2.15539852014445) (59, 2.2366534367275) (60, 2.2395081287825) (61, 2.15846082616706) (62, 2.15856463315088) (63, 2.2518611598568) (64, 2.2549234658794) (65, 2.1618345531411) (66, 2.1619383601249) (67, 2.18710248761) (68, 2.18795436386) (69, 2.171945303376) (70, 2.172308456825) (71, 2.23272826813) (72, 2.23478315841) (73, 2.1656235080505) (74, 2.1657273150343) (75, 2.170605556032) (76, 2.170836681256) (77, 2.295149359350) (78, 2.298603382052) (79, 2.1697238839112) (80, 2.1698276908951) (81, 2.327588354552) (82, 2.331584923429) (83, 2.1740318737397) (84, 2.1741356807235) (85, 2.2028869592) (86, 2.2036648244) (87, 2.180659503284) (88, 2.180890628508) (89, 2.2201703656) (90, 2.2212789877) (91, 2.188468785282) (92, 2.188831938731) (93, 2.2524473434) (94, 2.2541707603) (95, 2.1789627054710) (96, 2.1790665124548) (97, 2.268825565779) (98, 2.270777934543) (99, 2.31838352067) (100, 2.32125817665) (101, 2.1842049581538) (102, 2.1843087651376) (103, 2.2611055402) (104, 2.2626985310) (105, 2.19279357755) (106, 2.19302470278) (107, 2.2033158191) (108, 2.2037349815) (109, 2.33909640361) (110, 2.34199157082) (111, 2.1899662457556) (112, 2.1900700527395) (113, 2.342154085) (114, 2.344947807) (115, 2.23608537212) (116, 2.23693724837) (117, 2.218048017) (118, 2.218562439) (119, 2.1907647677) (120, 2.1908033556) (121, 2.2081435865) (122, 2.2084681520) (123, 2.2585918208) (124, 2.2597261278) (125, 2.1964541822442) (126, 2.1965579892280) (127, 2.23418445) (128, 2.23487917) (129, 2.1932536898) (130, 2.1932922777)
};
\end{axis}
\end{tikzpicture}}
\caption{\label{fig:mmm}Plot of the first 130 points in the orbit $\left(x_n\right)_{n=1}^\infty$ of the mean-median map with initial sequence $\left(0,\frac{\sqrt{5}-1}{2},1\right)$, with 10 significant digits accuracy (red) and with 20 significant digits accuracy (blue).}
\end{figure}
To study such phenomena in a formal setting, dynamicists have devised the concept of \textit{spatial discretisation} \cite{BeckRoepstorff,Blank,DiamondKloeden,DiamondKloedenPokrovskii1,DiamondKloedenPokrovskii2,DiamondSuzukiKloedenPokrovskii,Guiheneuf}, whereby the orbital behaviour of a map $F:X\to X$ is compared to that of a variant $f:=D\circ F$, where $D:X\to X$ is a map ---typically neither injective nor surjective--- specified to manifest the round-off \cite{Blank,DiamondKloeden}. This seemingly unpretentious idea has turned out to be a source of intractable problems, most notably those concerning periodicity \cite{Vivaldi,AkiyamaPetho,HannuschPetho}. Added to the intractability of such problems is the literature's lack of a collection of tractable spatially discretised systems which could serve as toy models.


It is therefore appropriate for the literature to begin building a collection of such systems. A modest starting point has been provided by Rozikov et al.\ \cite{RozikovSattarovUsmonov}, who studied the one-parameter family of discretised one-dimensional linear maps given by
\begin{equation}\label{eq:mapsRozikov}
f:\mathbb{R}\to\mathbb{R},\qquad f(x)=\left\lfloor\lambda x\right\rfloor,
\end{equation}
where $\lambda\in\mathbb{R}$, constructible as above by letting $X=\mathbb{R}$, $F(x)=\lambda x$, and $D(x)=\left\lfloor x\right\rfloor$. For every $\lambda\in\mathbb{R}$, the authors presented in their analysis an explicit description of both the set $$\Fix(f):=\{x\in\mathbb{R}:f(x)=x\}$$ of all fixed points of $f$, and the $\omega$-limit set $$\omega_f(x):=\left\{\ell\in\mathbb{R}:\text{there exist }n_1,n_2,\ldots\in\mathbb{N}_0\text{ with }n_1<n_2<\cdots\text{ such that }f^{n_k}(x)\xrightarrow{k\to\infty}\ell\right\}$$ under $f$ of every $x\in\mathbb{R}$. (In the above definition, as also throughout this paper, the superscript $n_k$ denotes $n_k$-fold self-composition.)

The aim of the present paper is to carry out the same analysis for a larger family of maps, namely, the two-parameter family of discretised one-dimensional affine maps given by
\begin{equation}\label{eq:maps}
f:\mathbb{R}\to\mathbb{R},\qquad f(x)=\left\lfloor\lambda x+\mu\right\rfloor,
\end{equation}
where $\lambda,\mu\in\mathbb{R}$, of which not only the maps studied in \cite{RozikovSattarovUsmonov} but also some of those discussed in \cite{EiseleHadeler} form subfamilies. Adopting the terminology already introduced by Long and Chen \cite{LongChen} for a two-dimensional version of $f$, we refer to $f$ as the \textit{quasi-affine map} induced by the affine map $$F:\mathbb{R}\to\mathbb{R},\qquad F(x)=\lambda x+\mu.$$

This paper is organised as follows. In the upcoming section \ref{sec:mainresults}, we describe our main results. These are summarised in three theorems: Theorem \ref{thm:FP} which describes the set $\Fix(f)$ of all fixed points of $f$, Theorem \ref{thm:2cycles} which describes the set
$$\Cyc(f):=\left\{\{x,f(x)\}:f(x)\neq x\text{ and }f^2(x)= x\right\}$$
of all 2-cycles of $f$, and Theorem \ref{thm:omegalimit} which describes the $\omega$-limit set $\omega_f(x)$ of every $x\in\mathbb{R}$ under $f$. We also comment on several consequences and corollaries of these theorems. The subsequent sections contain proofs of these theorems: Theorems \ref{thm:FP} and \ref{thm:2cycles} in section \ref{sec:FP}, and Theorem \ref{thm:omegalimit} in the final section \ref{sec:LP}.

\section{Main results and consequences}\label{sec:mainresults}

As previously mentioned, our main results consist of three theorems. The first two theorems deal with the map's periodic points. First, we provide a complete description of its fixed points, thereby generalising \cite[Lemma 1]{RozikovSattarovUsmonov}.\medskip

\begin{samepage}
\begin{theorem}\label{thm:FP}\
\begin{enumerate}[leftmargin=0.8cm]
\item[(i)] If $\lambda>1$, then
$$\Fix(f)=\begin{cases}
\left\{\left\lceil-\frac{\mu}{\lambda-1}\right\rceil,\left\lceil-\frac{\mu}{\lambda-1}\right\rceil+1,\ldots,\left\lceil-\frac{\mu-1}{\lambda-1}\right\rceil-1\right\},&\text{if }\left\lceil-\frac{\mu}{\lambda-1}\right\rceil\leqslant\left\lceil-\frac{\mu-1}{\lambda-1}\right\rceil-1;\\
\varnothing,&\text{if }\left\lceil-\frac{\mu}{\lambda-1}\right\rceil>\left\lceil-\frac{\mu-1}{\lambda-1}\right\rceil-1.
\end{cases}$$
\item[(ii)] If $\lambda=1$, then
$$\Fix(f)=\begin{cases}
\mathbb{Z},&\text{if }0\leqslant\mu<1;\\
\varnothing,&\text{if }\mu<0\text{ or }\mu\geqslant 1.
\end{cases}$$
\item[(iii)] If $\lambda<1$, then
$$\Fix(f)=\begin{cases}
\left\{\left\lfloor-\frac{\mu-1}{\lambda-1}\right\rfloor+1,\left\lfloor-\frac{\mu-1}{\lambda-1}\right\rfloor+2,\ldots,\left\lfloor-\frac{\mu}{\lambda-1}\right\rfloor\right\},&\text{if }\left\lfloor-\frac{\mu-1}{\lambda-1}\right\rfloor+1\leqslant\left\lfloor-\frac{\mu}{\lambda-1}\right\rfloor;\\
\varnothing,&\text{if }\left\lfloor-\frac{\mu-1}{\lambda-1}\right\rfloor+1>\left\lfloor-\frac{\mu}{\lambda-1}\right\rfloor.
\end{cases}$$
\end{enumerate}
\end{theorem}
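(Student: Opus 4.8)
The plan is to reduce the fixed-point equation to a set of integer inequalities. Since $f(x)=\lfloor\lambda x+\mu\rfloor$ always takes integer values, every fixed point must be an integer; hence I would first observe that $\Fix(f)\subseteq\mathbb{Z}$ and recast the problem as determining which integers $n$ satisfy $\lfloor\lambda n+\mu\rfloor=n$. Using the elementary characterisation that, for $n\in\mathbb{Z}$, one has $\lfloor y\rfloor=n$ if and only if $n\leq y<n+1$, this condition becomes the double inequality $n\leq\lambda n+\mu<n+1$, which I would rearrange into $-\mu\leq(\lambda-1)n<1-\mu$.

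The next step is to divide through by $\lambda-1$, which forces a case split according to the sign of $\lambda-1$. For $\lambda>1$ (case (i)) the division preserves both inequalities and yields $-\tfrac{\mu}{\lambda-1}\leq n<-\tfrac{\mu-1}{\lambda-1}$, so the admissible integers are exactly those in a half-open interval $[a,b)$. For $\lambda<1$ (case (iii)) the division reverses both inequalities and yields $-\tfrac{\mu-1}{\lambda-1}<n\leq-\tfrac{\mu}{\lambda-1}$, i.e.\ an interval of the form $(c,d]$. For $\lambda=1$ (case (ii)) the term $(\lambda-1)n$ vanishes, so the condition collapses to $0\leq\mu<1$ independently of $n$, giving $\Fix(f)=\mathbb{Z}$ when this holds and $\varnothing$ otherwise.

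Finally, I would translate the real-interval endpoints into the stated integer endpoints. For a half-open interval $[a,b)$ the smallest admissible integer is $\lceil a\rceil$ and the largest is $\lceil b\rceil-1$, which reproduces case (i); for an interval $(c,d]$ the smallest admissible integer is $\lfloor c\rfloor+1$ and the largest is $\lfloor d\rfloor$, reproducing case (iii). In each case the emptiness alternative is precisely the statement that the computed lower endpoint exceeds the computed upper endpoint.

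The main obstacle is not conceptual but a matter of careful bookkeeping: one must apply the correct ceiling/floor conversion to each endpoint while tracking which inequalities are strict, since an off-by-one slip here would misstate the boundary integers. In particular, the strict upper bound in case (i) is exactly what produces $\lceil\cdot\rceil-1$ rather than $\lfloor\cdot\rfloor$, and the strict lower bound in case (iii) is what produces $\lfloor\cdot\rfloor+1$; checking these conversions against the half-open nature of each interval is the crux of the argument.
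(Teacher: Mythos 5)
Your proposal is correct and follows essentially the same route as the paper, which likewise notes $\Fix(f)=\left\{x\in\mathbb{Z}:0\leqslant(\lambda-1)x+\mu<1\right\}$, splits on the sign of $\lambda-1$ (with $\lambda=1$ treated separately), and converts the resulting half-open intervals $\left[-\frac{\mu}{\lambda-1},-\frac{\mu-1}{\lambda-1}\right)$ and $\left(-\frac{\mu-1}{\lambda-1},-\frac{\mu}{\lambda-1}\right]$ into the stated ceiling/floor endpoints. Your endpoint conversions and emptiness criteria are all accurate, so there is nothing to correct.
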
\medskip
\end{samepage}

\begin{figure}
\centering
\includegraphics[height=7cm]{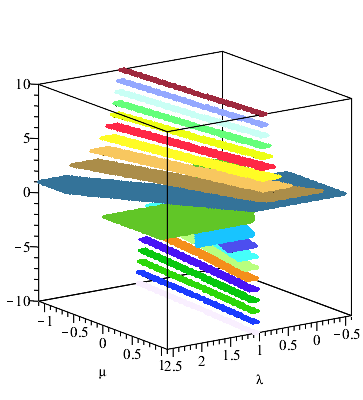}\qquad\qquad\includegraphics[height=6.5cm]{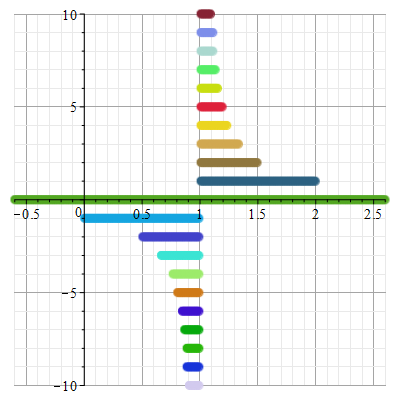}
\caption{\label{fig:bifurcation}The bifurcation diagrams of the fixed points of the map \eqref{eq:maps} (left) and of the map \eqref{eq:mapsRozikov} (right).}
\end{figure}

\noindent This theorem, proved in section \ref{sec:FP}, allows the generation of the codimension-two bifurcation diagram
$$\left\{(\lambda,\mu,x)\in\mathbb{R}^3:\left\lfloor\lambda x+\mu\right\rfloor=x\right\}$$
of the fixed points of $f$, shown in Figure \ref{fig:bifurcation} (left). Projecting this to the vertical plane $\mu=0$ gives the codimension-one bifurcation diagram
$$\left\{(\lambda,x)\in\mathbb{R}^2:\left\lfloor\lambda x\right\rfloor=x\right\}$$
of the fixed points of the map studied in \cite{RozikovSattarovUsmonov}, shown in Figure \ref{fig:bifurcation} (right). Notice the distinguished topological change occurring at $\lambda=1$. First, the trivial fixed point $0$ exists for all values of $\lambda$. As $\lambda\to 1^-$, we have $\left|\Fix(f)\cap\mathbb{Z}^-\right|\rightarrow\infty$ while $\Fix(f)\cap\mathbb{Z}^+=\varnothing$. On the other hand, as $\lambda\to 1^+$, we have $\left|\Fix(f)\cap\mathbb{Z}^+\right|\rightarrow\infty$ while $\Fix(f)\cap\mathbb{Z}^-=\varnothing$. At $\lambda=1$, we have $\Fix(f)=\mathbb{Z}$. To the best of the author's knowledge, no terminology has been given to such a bifurcation of fixed points, and indeed to any bifurcation undergone by spatially discretised systems, which therefore deserves further attention.
 

\begin{figure}
\centering
\includegraphics[height=7cm]{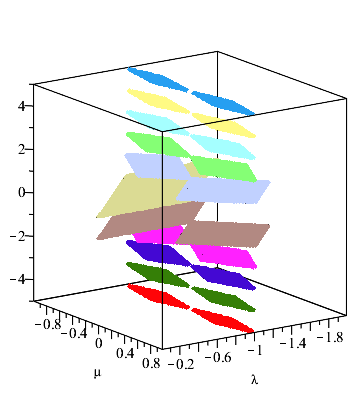}
\caption{\label{fig:bifurcationplane}The bifurcation diagram of the period-2 points of the map \eqref{eq:maps}.}
\end{figure}

Theorem \ref{thm:FP} also allows the formulation of an explicit formula for the number $N_p$ of fixed points of $f$ as a function of two variables $\lambda,\mu\in\mathbb{R}$:
$$N_p(\lambda,\mu)=\begin{cases}
\max\left\{0,\left\lceil-\frac{\mu-1}{\lambda-1}\right\rceil-\left\lceil-\frac{\mu}{\lambda-1}\right\rceil\right\},&\text{if }\lambda>1;\\
0,&\text{if }\lambda=1\text{ and }\mu\in(-\infty,0)\cup[1,\infty);\\
\infty,&\text{if }\lambda=1\text{ and }\mu\in[0,1);\\
\max\left\{0,\left\lfloor-\frac{\mu}{\lambda-1}\right\rfloor-\left\lfloor-\frac{\mu-1}{\lambda-1}\right\rfloor\right\},&\text{if }\lambda<1.
\end{cases}$$
Using the basic inequalities
$$x-1<\left\lfloor x\right\rfloor\leqslant x\qquad\text{and}\qquad x\leqslant\left\lceil x\right\rceil<x+1$$
valid for every $x\in\mathbb{R}$, one verifies that for $\lambda>1$ and for $\lambda<1$ we have, respectively,
$$N_p(\lambda,\mu)\geqslant\frac{1}{\lambda-1}-1\xrightarrow{\lambda\to1^+}\infty\qquad\text{and}\qquad N_p(\lambda,\mu)\geqslant-1-\frac{1}{\lambda-1}\xrightarrow{\lambda\to1^-}\infty,$$
for every $\mu\in\mathbb{R}$. Moreover, for every $\mu\in\mathbb{R}$, there exist values of $\lambda\in\mathbb{R}$ such that $f$ has exactly $N_p(\lambda,\mu)=n\in\mathbb{N}$ fixed points: among others, $\frac{n-1}{n}$ and $\frac{n+1}{n}$, as easily verified (cf.\ \cite[Lemma 1]{RozikovSattarovUsmonov}).

Our second main result gives an explicit description of the set $\Cyc(f)$ of all 2-cycles of $f$.\medskip

\begin{theorem}\label{thm:2cycles}
The set of all $2$-cycles of $f$ is given by
$$\Cyc(f)=\begin{cases}
\bigcup\limits_{k=1}^{\left\lceil\frac{1}{\lambda+1}\right\rceil-1}\left\{\left\{x,x+k\right\}:x\in\left\{\left\lfloor\frac{-\lambda k-\mu+1}{\lambda-1}\right\rfloor+1,\ldots,\left\lfloor\frac{k-\mu}{\lambda-1}\right\rfloor\right\}\right\},&\text{if }{-1<\lambda<0};\\
\left\{\left\{x,-x+\left\lfloor\mu\right\rfloor\right\}:x\in\mathbb{Z}\right\},&\text{if }\lambda=-1;\\
\bigcup\limits_{k=1}^{-\left\lfloor\frac{1}{\lambda+1}\right\rfloor-1}\left\{\left\{x,x+k\right\}:x\in\left\{\left\lfloor\frac{k+1-\mu}{\lambda-1}\right\rfloor+1,\ldots,\left\lfloor\frac{-\lambda k-\mu}{\lambda-1}\right\rfloor\right\}\right\},&\text{if }{-2<\lambda<-1};\\
\varnothing,&\text{if }\lambda\leqslant-2\text{ or }\lambda\geqslant 0.
\end{cases}$$
\end{theorem}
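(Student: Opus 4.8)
\medskip
\noindent The plan is to convert ``being a $2$-cycle'' into a pair of floor inequalities, intersect the resulting half-open intervals, and count the integers inside.

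First I would record two structural reductions. Since $f(\R)\subseteq\Z$, any $x$ with $f^2(x)=x$ lies in $\Z$, and so does $f(x)$; hence every $2$-cycle is a pair of \emph{distinct integers}. Next, for $\lambda\geqslant 0$ the affine part $\lambda x+\mu$ is non-decreasing, so $f$ is non-decreasing; but a non-decreasing map has no $2$-cycle, since $a<b$ forces $f(a)\leqslant f(b)$, contradicting $f(a)=b>a=f(b)$. This settles the case $\lambda\geqslant 0$. For the remaining range $\lambda<0$ the map is non-increasing, so I would write a candidate $2$-cycle as $\{a,a+k\}$ with $a\in\Z$ and $k\in\N$, taking $a$ to be the smaller element; each $2$-cycle then arises exactly once.

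The two defining equations $f(a)=a+k$ and $f(a+k)=a$, unwound via $\lfloor y\rfloor=m\iff m\leqslant y<m+1$ and divided through by $\lambda-1<0$, become
$$\text{(C1)}\quad\frac{k+1-\mu}{\lambda-1}<a\leqslant\frac{k-\mu}{\lambda-1},\qquad\text{(C2)}\quad\frac{-\lambda k-\mu+1}{\lambda-1}<a\leqslant\frac{-\lambda k-\mu}{\lambda-1}.$$
A $2$-cycle of gap $k$ corresponds precisely to an integer $a$ in the intersection of these half-open intervals. To identify the binding endpoints I would compare the two lower numerators and the two upper numerators; in each comparison the difference equals $k(1+\lambda)$, so after dividing by $\lambda-1<0$ the ordering is dictated by the sign of $1+\lambda$. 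For $-1<\lambda<0$ the intersection is $\left(\frac{-\lambda k-\mu+1}{\lambda-1},\frac{k-\mu}{\lambda-1}\right]$ (lower bound from C2, upper from C1); for $-2<\lambda<-1$ it is $\left(\frac{k+1-\mu}{\lambda-1},\frac{-\lambda k-\mu}{\lambda-1}\right]$ (lower from C1, upper from C2); and at $\lambda=-1$ the two intervals coincide. Reading the integers of any $(\alpha,\beta]$ as $\lfloor\alpha\rfloor+1,\ldots,\lfloor\beta\rfloor$ then reproduces exactly the index sets asserted in the theorem.

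It remains to pin down the admissible gaps $k$. For $\lambda\neq-1$ the length of the intersection is, up to sign, $\frac{k(1+\lambda)-1}{\lambda-1}$, which is positive iff $k\,|1+\lambda|<1$; the largest such $k$ is $\left\lceil\frac{1}{\lambda+1}\right\rceil-1$ when $-1<\lambda<0$ and $-\left\lfloor\frac{1}{\lambda+1}\right\rfloor-1$ when $-2<\lambda<-1$ (the floor/ceiling convention absorbing the boundary case in which $\frac{1}{\lambda+1}$ is an integer), matching the upper summation limits. For $\lambda\leqslant-2$ this bound drops below $1$, so no positive gap survives and $\Cyc(f)=\varnothing$. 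Finally, at $\lambda=-1$ the common interval has constant length $\tfrac12$ for every $k\in\N$, so infinitely many gaps occur, each yielding at most one $a$; assembling them reproduces the involution $a\mapsto\lfloor\mu\rfloor-a$ on $\Z$ and hence the family $\{x,-x+\lfloor\mu\rfloor\}$, with the lone fixed point $\lfloor\mu\rfloor/2$ (when $\lfloor\mu\rfloor$ is even) tacitly excluded. The main obstacle I anticipate is purely bookkeeping: tracking strict versus non-strict inequalities through the division by $\lambda-1$, and checking that the ceiling/floor expressions for the maximal gap agree with the stated formulas precisely at the integer boundary.
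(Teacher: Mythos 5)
Your proposal is correct and follows essentially the same route as the paper: reduce to integer pairs $\{a,a+k\}$, translate the two floor equations into the half-open interval constraints (C1)--(C2), compare endpoints via the difference $k(1+\lambda)$, count integers as $\lfloor\alpha\rfloor+1,\ldots,\lfloor\beta\rfloor$, and bound the admissible gap by $k\,|1+\lambda|<1$ (the paper obtains this last constraint by subtracting the two equations before substituting, and dispatches $\lambda\geqslant 0$ inside that inequality rather than by monotonicity, but these are equivalent bookkeeping). One trivial slip: your ``up to sign'' length expression $\frac{k(1+\lambda)-1}{\lambda-1}$ is literally correct only for $-1<\lambda<0$ (for $-2<\lambda<-1$ the length is $\frac{k(1+\lambda)+1}{1-\lambda}$), but the positivity criterion $k\,|1+\lambda|<1$ you actually invoke is right in both cases, so nothing breaks.
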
\medskip

\noindent Notice that in the case $\lambda=-1$, every integer is a period-$2$ point. Theorem \ref{thm:2cycles}, also proved in section \ref{sec:FP}, implies that the codimension-two bifurcation diagram
$$\left\{(\lambda,\mu,x)\in\mathbb{R}^3:\left\lfloor\lambda x+\mu\right\rfloor\neq x\text{ and }\left\lfloor\lambda \left\lfloor\lambda x+\mu\right\rfloor+\mu\right\rfloor=x\right\}$$
of the period-2 points of $f$ is as displayed in Figure \ref{fig:bifurcationplane}, and that the number $N_c$ of $2$-cycles of $f$ as a function of $\lambda,\mu\in\mathbb{R}$ is given by
$$N_c(\lambda,\mu)=\begin{cases}
\sum\limits_{k=1}^{\left\lceil\frac{1}{\lambda+1}\right\rceil-1}\max\left\{0,\left\lfloor\frac{k-\mu}{\lambda-1}\right\rfloor-\left\lfloor\frac{-\lambda k-\mu+1}{\lambda-1}\right\rfloor\right\},&\text{if }{-1<\lambda<0};\\
\infty,&\text{if }\lambda=-1;\\
\sum\limits_{k=1}^{-\left\lfloor\frac{1}{\lambda+1}\right\rfloor-1}\max\left\{0,\left\lfloor\frac{-\lambda k-\mu}{\lambda-1}\right\rfloor-\left\lfloor\frac{k+1-\mu}{\lambda-1}\right\rfloor\right\},&\text{if }{-2<\lambda<-1};\\
0,&\text{if }\lambda\leqslant-2\text{ or }\lambda\geqslant 0.
\end{cases}$$

As the monotonicity of $f$ implies the non-existence of $n$-cycles for every $n\geqslant 3$, we next turn our attention to $\omega$-limit sets under the map. Our final theorem characterises $\omega_f(x)$ for every $x\in\mathbb{R}$, for various pairs of parameter values, thereby generalising \cite[Theorems 2--4]{RozikovSattarovUsmonov}.\medskip

\begin{theorem}\label{thm:omegalimit}\
\begin{enumerate}[leftmargin=1cm]
\item[(i)] If $\lambda>1$ and $\left\lceil-\frac{\mu}{\lambda-1}\right\rceil\leqslant\left\lceil-\frac{\mu-1}{\lambda-1}\right\rceil-1$, then for every $x\in\mathbb{R}$ we have
$$\omega_f(x)=\begin{cases}
\{\infty\},&\text{if }x\geqslant\frac{1}{\lambda}\left(\left\lceil-\frac{\mu-1}{\lambda-1}\right\rceil-\mu\right);\\
\{-\infty\},&\text{if }x<\frac{1}{\lambda}\left(\left\lceil-\frac{\mu}{\lambda-1}\right\rceil-\mu\right);\\
\{f(x)\},&\text{if }\frac{1}{\lambda}\left(\left\lceil-\frac{\mu}{\lambda-1}\right\rceil-\mu\right)\leqslant x<\frac{1}{\lambda}\left(\left\lceil-\frac{\mu-1}{\lambda-1}\right\rceil-\mu\right).
\end{cases}$$
\item[(ii)] If $\lambda>1$ and $\left\lceil-\frac{\mu}{\lambda-1}\right\rceil>\left\lceil-\frac{\mu-1}{\lambda-1}\right\rceil-1$, then for every $x\in\mathbb{R}$ we have
$$\omega_f(x)=\begin{cases}
\{\infty\},&\text{if }x\geqslant\frac{1}{\lambda}\left(\left\lfloor-\frac{\mu}{\lambda-1}\right\rfloor-\mu+1\right);\\
\{-\infty\},&\text{if }x<\frac{1}{\lambda}\left(\left\lfloor-\frac{\mu}{\lambda-1}\right\rfloor-\mu+1\right).
\end{cases}$$
\item[(iii)] If $\lambda=1$, then the following holds.
\begin{itemize}[leftmargin=0.5cm]
\item If $\mu\geqslant 1$, then for every $x\in\mathbb{R}$ we have $\omega_f(x)=\{\infty\}$.
\item If $\mu<0$, then for every $x\in\mathbb{R}$ we have $\omega_f(x)=\{-\infty\}$.
\item If $\mu\in[0,1)$, then for every $x\in\mathbb{R}$ we have $\omega_f(x)=\left\{\left\lfloor x+\mu\right\rfloor\right\}$.
\end{itemize}
\item[(iv)] If $0<\lambda<1$, then for every $x\in\mathbb{R}$ we have
$$\omega_f(x)=\begin{cases}
\left\{\left\lfloor-\frac{\mu-1}{\lambda-1}\right\rfloor+1\right\},&\text{if }x\leqslant-\frac{\mu-1}{\lambda-1};\\
\left\{\left\lfloor-\frac{\mu}{\lambda-1}\right\rfloor\right\},&\text{if }x>-\frac{\mu}{\lambda-1};\\
\{f(x)\},&\text{if }{-\frac{\mu-1}{\lambda-1}<x\leqslant-\frac{\mu}{\lambda-1}}.
\end{cases}$$
\item[(v)] If $\lambda=0$, then for every $x\in\mathbb{R}$ we have
$\omega_f(x)=\left\{\left\lfloor \mu\right\rfloor\right\}$.
\item[(vi)] If $-1<\lambda<0$ and $\left\lfloor-\frac{\mu-1}{\lambda-1}\right\rfloor+1\leqslant\left\lfloor-\frac{\mu}{\lambda-1}\right\rfloor$, then the following holds.
\begin{itemize}[leftmargin=0.5cm]
\item If $\frac{1}{\lambda}\left(\left\lfloor-\frac{\mu}{\lambda-1}\right\rfloor-\mu+1\right)<x\leqslant\frac{1}{\lambda}\left(\left\lfloor-\frac{\mu}{\lambda-1}\right\rfloor-\mu\right)$, then $\omega_f(x)=\left\{\left\lfloor-\frac{\mu}{\lambda-1}\right\rfloor\right\}$.
\item If $x\leqslant\frac{1}{\lambda}\left(\left\lfloor-\frac{\mu}{\lambda-1}\right\rfloor-\mu+1\right)$ or $x>\frac{1}{\lambda}\left(\left\lfloor-\frac{\mu}{\lambda-1}\right\rfloor-\mu\right)$, then $\omega_f(x)$ is either a $2$-cycle or $\left\{\left\lfloor-\frac{\mu}{\lambda-1}\right\rfloor\right\}$.
\end{itemize}
\item[(vii)] If $-1<\lambda<0$ and $\left\lfloor-\frac{\mu-1}{\lambda-1}\right\rfloor+1>\left\lfloor-\frac{\mu}{\lambda-1}\right\rfloor$, then for every $x\in\mathbb{R}$, $\omega_f(x)$ is a $2$-cycle.
\item[(viii)] If $\lambda\leqslant -1$ and $\left\lfloor-\frac{\mu-1}{\lambda-1}\right\rfloor+1\leqslant\left\lfloor-\frac{\mu}{\lambda-1}\right\rfloor$, then the following holds.
\begin{itemize}[leftmargin=0.5cm]
\item If $\frac{1}{\lambda}\left(\left\lfloor-\frac{\mu}{\lambda-1}\right\rfloor-\mu+1\right)<x\leqslant \frac{1}{\lambda}\left(\left\lfloor-\frac{\mu}{\lambda-1}\right\rfloor-\mu\right)$, then $\omega_f(x)=\left\{\left\lfloor-\frac{\mu}{\lambda-1}\right\rfloor\right\}$.
\item If $x\leqslant\frac{1}{\lambda}\left(\left\lfloor-\frac{\mu}{\lambda-1}\right\rfloor-\mu+1\right)$ or $x>\frac{1}{\lambda}\left(\left\lfloor-\frac{\mu}{\lambda-1}\right\rfloor-\mu\right)$, then $\omega_f(x)$ is either a $2$-cycle or $\{-\infty,\infty\}$.
\end{itemize}
\item[(ix)] If $\lambda\leqslant-1$ and $\left\lfloor-\frac{\mu-1}{\lambda-1}\right\rfloor+1>\left\lfloor-\frac{\mu}{\lambda-1}\right\rfloor$, then for every $x\in\mathbb{R}$, $\omega_f(x)$ is either a $2$-cycle or $\{-\infty,\infty\}$.
\end{enumerate}
\end{theorem}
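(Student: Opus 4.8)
The plan is to reduce the whole problem to the dynamics of $f$ on $\mathbb{Z}$ and then exploit monotonicity. Since $f(\mathbb{R})\subseteq\mathbb{Z}$, for every $x\in\mathbb{R}$ we have $\omega_f(x)=\omega_f(f(x))$ with the entire tail of the orbit lying in $\mathbb{Z}$; it therefore suffices to describe the orbit of the integer $f(x)=\lfloor\lambda x+\mu\rfloor$ and then translate each hypothesis on $x$ into a statement about which region $f(x)$ occupies. Two facts organise the argument. First, when $\lambda\ge 0$ the map $f$ is non-decreasing, so $f(y)\ge y$ forces the orbit to be non-decreasing and $f(y)\le y$ forces it to be non-increasing; hence every integer orbit is eventually monotone and converges to a fixed point or diverges to $\pm\infty$. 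Second, when $\lambda<0$ the map $f$ is non-increasing, so $g:=f^2$ is non-decreasing and the same monotone-orbit principle applies to $g$, the even and odd subsequences being recombined at the end.

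The thresholds in the statement are read off from the affine map $F(x)=\lambda x+\mu$. Its fixed point is $-\frac{\mu}{\lambda-1}$ (call it $B$) and the solution of $F(x)=x+1$ is $-\frac{\mu-1}{\lambda-1}$ (call it $A$), so that $F(B)=B$ and $F(A)=A+1$. For $\lambda>0$ the sign of $f(m)-m$ on integers reproduces exactly the fixed-point block of Theorem \ref{thm:FP}: the orbit strictly decreases below the block, strictly increases above it, and is stationary on it. For $\lambda>1$ (cases (i)--(ii)) the slope exceeds $1$, the decreasing and increasing regions are invariant, and the orbit escapes to $-\infty$ or $+\infty$; the remaining middle region is precisely the set of $x$ whose first iterate $f(x)$ lands in $\Fix(f)$, whence $\omega_f(x)=\{f(x)\}$, and when $\Fix(f)=\varnothing$ only the two escaping regions survive, separated by the single threshold of case (ii). For $0<\lambda<1$ (case (iv)) the slope is less than $1$: the increasing region lies below the fixed block and the decreasing region above it, so monotone convergence drives each orbit to the smallest fixed point $\lfloor A\rfloor+1$ or the largest fixed point $\lfloor B\rfloor$; pushing these integer regions back through $F$, using $F(A)=A+1$ and $F(B)=B$, yields the stated thresholds $x\le A$, $A<x\le B$, $x>B$. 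The degenerate cases are immediate: $\lambda=0$ gives the constant $\lfloor\mu\rfloor$, while $\lambda=1$ gives $f(m)=m+\lfloor\mu\rfloor$, a drift governed entirely by the sign of $\lfloor\mu\rfloor$.

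For $\lambda<0$ the analysis is carried out on $g:=f^2$, which is non-decreasing. Its finite fixed points are exactly $\Fix(f)$ together with the period-$2$ points of $f$, both enumerated by Theorems \ref{thm:FP} and \ref{thm:2cycles}; moreover $B-A=\frac{1}{1-\lambda}<1$ here, so $f$ has at most one fixed point, necessarily $p:=\lfloor-\frac{\mu}{\lambda-1}\rfloor$. By the monotone trichotomy applied to $g$, each orbit either converges to a finite $g$-fixed point $q$ or has its even subsequence diverging to $\pm\infty$. Recombining with the odd subsequence $f^{2n+1}(x)=f(f^{2n}(x))$ turns a finite limit $q$ into $\{q,f(q)\}$ --- a $2$-cycle unless $q=p$, in which case it is $\{p\}$ --- and turns an escape into $\{-\infty,\infty\}$, because $\lambda<0$ interchanges $+\infty$ and $-\infty$. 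When $-1<\lambda<0$ (cases (vi)--(vii)) the slope $\lambda^2<1$ keeps every orbit bounded, so escape is impossible and every orbit converges to a finite $g$-fixed point; hence $\omega_f(x)$ is a fixed point or a $2$-cycle, and in particular is a $2$-cycle whenever $\Fix(f)=\varnothing$, giving (vii). When $\lambda\le-1$ (cases (viii)--(ix)) the slope $\lambda^2\ge1$ permits escape, which supplies the $\{-\infty,\infty\}$ alternative.

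The delicate step --- and the one I expect to be the main obstacle --- is determining exactly when the fixed point $p$ can be reached, since this is what sharpens the dichotomy of (vi) against (viii)--(ix). The real preimage set $f^{-1}(p)$ is precisely the interval $\left(\frac{1}{\lambda}(p-\mu+1),\frac{1}{\lambda}(p-\mu)\right]$ occurring in the statement, of length $\frac{1}{|\lambda|}$. For $-1<\lambda<0$ this length exceeds $1$, so the interval contains integers besides $p$; orbits that begin outside it can still be mapped onto $p$ at a later step, which is exactly why (vi) must permit $\omega_f(x)=\{p\}$ in the complementary region. For $\lambda\le-1$ the length is at most $1$ and $p$ is its only integer, so no integer other than $p$ maps to $p$; consequently an orbit with $f(x)\ne p$ never reaches $p$, forcing $\omega_f(x)$ to be a $2$-cycle or $\{-\infty,\infty\}$, as in (viii)--(ix). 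Confirming these length-and-integer-content claims, treating the boundary $\lambda=-1$ separately (there $g$ restricts to the identity on $\mathbb{Z}$, so every integer is periodic of period at most $2$), and checking that the half-open thresholds assign each $x$ to the correct region are the remaining, computation-heavy but routine, verifications.
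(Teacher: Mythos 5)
Your proposal is correct, and its $\lambda\geqslant 0$ half is essentially the paper's own argument (sign of $f(x)-x$ relative to the fixed-point block, with the middle region characterised as $\{x: f(x)\in\Fix(f)\}$; the threshold translations you defer are routine and do check out). For $\lambda<0$, however, you take a genuinely different route. The paper partitions $\mathbb{R}$ into the intervals $I_n:=\left(\frac{1}{\lambda}\left(\left\lfloor-\frac{\mu}{\lambda-1}\right\rfloor-\mu+n+1\right),\frac{1}{\lambda}\left(\left\lfloor-\frac{\mu}{\lambda-1}\right\rfloor-\mu+n\right)\right]$, proves that $f(x)=\left\lfloor-\frac{\mu}{\lambda-1}\right\rfloor+n$ iff $x\in I_n$, and then tracks the index of the interval containing $f^{2j}(x)$: for $\lambda\leqslant-1$ the unions $I_n\cup I_{n+1}\cup\cdots$ ($n\geqslant 1$) are $f^2$-invariant, so the index sequence either stabilises (a $2$-cycle) or increases without bound ($\{-\infty,\infty\}$), and invariance away from $I_0$ is what excludes $\left\{\left\lfloor-\frac{\mu}{\lambda-1}\right\rfloor\right\}$ in parts (viii)--(ix); for $-1<\lambda<0$ the indices strictly decrease toward $0$, and finite descent forces termination at a $2$-cycle or the fixed point. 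You instead apply the monotone trichotomy directly to the non-decreasing map $g=f^2$ on $\mathbb{Z}$ (monotone integer orbits are eventually constant at a $g$-fixed point or diverge), recombine even and odd subsequences, and then supply the two case-splitting ingredients by different means: a contraction estimate $|f(y)|\leqslant|\lambda||y|+|\mu|+1$ to rule out escape when $|\lambda|<1$ (replacing the paper's finite descent), and the observation that $f^{-1}(p)$ has length $1/|\lambda|\leqslant 1$ for $\lambda\leqslant-1$, so $p$ is its only integer preimage and an orbit with $f(x)\neq p$ never reaches $p$ (replacing the paper's invariant-union argument). Both mechanisms are sound --- eventual constancy of the even subsequence means $p$ would have to be hit at a finite time, which your preimage count forbids --- and your version buys a cleaner, more standard statement of the dichotomy at the cost of leaving the threshold bookkeeping implicit, while the paper's interval bookkeeping makes the $f^2$-invariant structure explicit and handles all sign cases uniformly.
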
\medskip

By Theorem \ref{thm:2cycles}, if $\lambda\leqslant -2$, then no 2-cycles exist, and so we have the following special case of parts (viii) and (ix).\medskip

\begin{corollary}
Suppose $\lambda\leqslant -2$. If $\left\lfloor-\frac{\mu-1}{\lambda-1}\right\rfloor+1\leqslant\left\lfloor-\frac{\mu}{\lambda-1}\right\rfloor$, then the following holds.
\begin{itemize}[leftmargin=0.5cm]
\item If $\frac{1}{\lambda}\left(\left\lfloor-\frac{\mu}{\lambda-1}\right\rfloor-\mu+1\right)<x\leqslant \frac{1}{\lambda}\left(\left\lfloor-\frac{\mu}{\lambda-1}\right\rfloor-\mu\right)$, then $\omega_f(x)=\left\{\left\lfloor-\frac{\mu}{\lambda-1}\right\rfloor\right\}$.
\item If $x\leqslant\frac{1}{\lambda}\left(\left\lfloor-\frac{\mu}{\lambda-1}\right\rfloor-\mu+1\right)$ or $x>\frac{1}{\lambda}\left(\left\lfloor-\frac{\mu}{\lambda-1}\right\rfloor-\mu\right)$, then $\omega_f(x)=\{-\infty,\infty\}$.
\end{itemize}
If $\left\lfloor-\frac{\mu-1}{\lambda-1}\right\rfloor+1>\left\lfloor-\frac{\mu}{\lambda-1}\right\rfloor$, then for every $x\in\mathbb{R}$ we have $\omega_f(x)=\{-\infty,\infty\}$.
\end{corollary}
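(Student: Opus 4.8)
The plan is to derive the corollary directly from parts (viii) and (ix) of Theorem \ref{thm:omegalimit} by specialising to $\lambda\leqslant-2$ and invoking Theorem \ref{thm:2cycles} to delete the $2$-cycle alternative. Since $\lambda\leqslant-2$ entails $\lambda\leqslant-1$, the hypotheses of parts (viii) and (ix) are satisfied, and the two cases of the corollary correspond precisely to the dichotomy $\left\lfloor-\frac{\mu-1}{\lambda-1}\right\rfloor+1\leqslant\left\lfloor-\frac{\mu}{\lambda-1}\right\rfloor$ versus its negation, which is exactly the dichotomy separating those two parts.

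First I would recall that Theorem \ref{thm:2cycles} asserts $\Cyc(f)=\varnothing$ whenever $\lambda\leqslant-2$; that is, the map possesses no $2$-cycles throughout this parameter regime. Consequently, in every conclusion of parts (viii) and (ix) reading ``$\omega_f(x)$ is either a $2$-cycle or $\{-\infty,\infty\}$'', the first alternative is vacuous, leaving only $\omega_f(x)=\{-\infty,\infty\}$.

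Next I would assemble the two cases. Under the hypothesis $\left\lfloor-\frac{\mu-1}{\lambda-1}\right\rfloor+1\leqslant\left\lfloor-\frac{\mu}{\lambda-1}\right\rfloor$, part (viii) applies: its first clause, giving $\omega_f(x)=\left\{\left\lfloor-\frac{\mu}{\lambda-1}\right\rfloor\right\}$ on the interval $\frac{1}{\lambda}\left(\left\lfloor-\frac{\mu}{\lambda-1}\right\rfloor-\mu+1\right)<x\leqslant\frac{1}{\lambda}\left(\left\lfloor-\frac{\mu}{\lambda-1}\right\rfloor-\mu\right)$, carries over verbatim as it makes no reference to $2$-cycles, while its second clause collapses to $\omega_f(x)=\{-\infty,\infty\}$ by the previous step. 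Under the opposite hypothesis, part (ix) applies and its sole conclusion likewise collapses to $\omega_f(x)=\{-\infty,\infty\}$ for every $x\in\mathbb{R}$. This reproduces both cases of the corollary exactly.

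There is essentially no obstacle here: the entire substantive content resides in the two cited theorems, and the corollary is a pure specialisation obtained by discarding an impossible alternative. The only point demanding attention is verifying that the interval conditions and floor expressions appearing in parts (viii) and (ix) are inherited without modification---which they are, since the strengthening of the hypothesis from $\lambda\leqslant-1$ to $\lambda\leqslant-2$ affects only the existence of $2$-cycles and leaves the threshold values $\frac{1}{\lambda}\left(\left\lfloor-\frac{\mu}{\lambda-1}\right\rfloor-\mu+1\right)$ and $\frac{1}{\lambda}\left(\left\lfloor-\frac{\mu}{\lambda-1}\right\rfloor-\mu\right)$ untouched.
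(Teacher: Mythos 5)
Your proposal is correct and coincides with the paper's own justification, which derives the corollary exactly as you do: Theorem \ref{thm:2cycles} gives $\Cyc(f)=\varnothing$ for $\lambda\leqslant-2$, so the ``$2$-cycle'' alternative in parts (viii) and (ix) of Theorem \ref{thm:omegalimit} is vacuous and only $\{-\infty,\infty\}$ (or the fixed-point singleton in the first clause of (viii)) survives. Nothing further is needed.
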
\medskip

\noindent In addition, if $\lambda=-1$, then direct computation shows that for every $x\in \mathbb{R}$ we have $f(x)=\left\lfloor-x+\mu\right\rfloor$, $f^2(x)=-\left\lfloor-x+\mu\right\rfloor+\left\lfloor\mu\right\rfloor$, and $f^3(x)=f(x)$, giving rise to the following corollary.\medskip

\begin{corollary}
Suppose $\lambda=-1$. For every $x\in\mathbb{R}$ we have $$\omega_f(x)=\left\{\left\lfloor-x+\mu\right\rfloor,-\left\lfloor-x+\mu\right\rfloor+\left\lfloor\mu\right\rfloor\right\}.$$
\end{corollary}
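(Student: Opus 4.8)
The plan is to exploit the remarkably simple recursive structure that the orbits acquire when $\lambda=-1$, which is already foreshadowed in the sentence preceding the statement. The only tool required is the integer-translation identity $\lfloor m+t\rfloor=m+\lfloor t\rfloor$, valid for every $m\in\mathbb{Z}$ and $t\in\mathbb{R}$.

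First I would specialise the map: setting $\lambda=-1$ in \eqref{eq:maps} gives $f(x)=\lfloor-x+\mu\rfloor$ for every $x\in\mathbb{R}$. Since $f(x)$ is an integer, the identity above yields $f^2(x)=\lfloor-f(x)+\mu\rfloor=-f(x)+\lfloor\mu\rfloor=-\lfloor-x+\mu\rfloor+\lfloor\mu\rfloor$, which is again an integer. Applying $f$ once more and using the identity a second time gives $f^3(x)=-f^2(x)+\lfloor\mu\rfloor=\lfloor-x+\mu\rfloor=f(x)$.

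From $f^3=f$ on all of $\mathbb{R}$ I would deduce that the tail of every orbit is exactly $2$-periodic: for each $x$ one has $f^n(x)=f(x)$ whenever $n\geqslant1$ is odd and $f^n(x)=f^2(x)$ whenever $n\geqslant2$ is even. Consequently the sequence $\left(f^n(x)\right)_{n\geqslant1}$ takes only the two values $f(x)$ and $f^2(x)$, each attained for infinitely many $n$. By the definition of the $\omega$-limit set, its accumulation points are precisely those two values, so $\omega_f(x)=\left\{f(x),f^2(x)\right\}=\left\{\lfloor-x+\mu\rfloor,-\lfloor-x+\mu\rfloor+\lfloor\mu\rfloor\right\}$, as claimed.

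There is essentially no genuine obstacle here; the whole argument collapses to the two floor computations above together with the elementary observation that a sequence eventually alternating between two values has exactly those values as its limit points. The only point deserving a word of care is the degenerate case in which the two listed values coincide, equivalently $2\lfloor-x+\mu\rfloor=\lfloor\mu\rfloor$, in which $f(x)$ is a fixed point and the set collapses to a singleton; the displayed formula remains correct, since the two-element set notation is understood to permit repetition.
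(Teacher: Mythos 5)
Your proposal is correct and follows exactly the paper's own route: the paper also just computes $f(x)=\left\lfloor-x+\mu\right\rfloor$, $f^2(x)=-\left\lfloor-x+\mu\right\rfloor+\left\lfloor\mu\right\rfloor$, and $f^3(x)=f(x)$ via the integer-translation identity and reads off the $\omega$-limit set from the eventually $2$-periodic tail. Your added remark on the degenerate case $2\left\lfloor-x+\mu\right\rfloor=\left\lfloor\mu\right\rfloor$, where the set collapses to a singleton, is a harmless refinement the paper leaves implicit.
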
\medskip

Cobweb diagrams of $f$ which visualise the map's orbital behaviour in each of the above cases are presented in Figure \ref{fig:cobweb}. In the final section \ref{sec:LP}, we shall prove Theorem \ref{thm:omegalimit} by dividing it into a number of propositions.


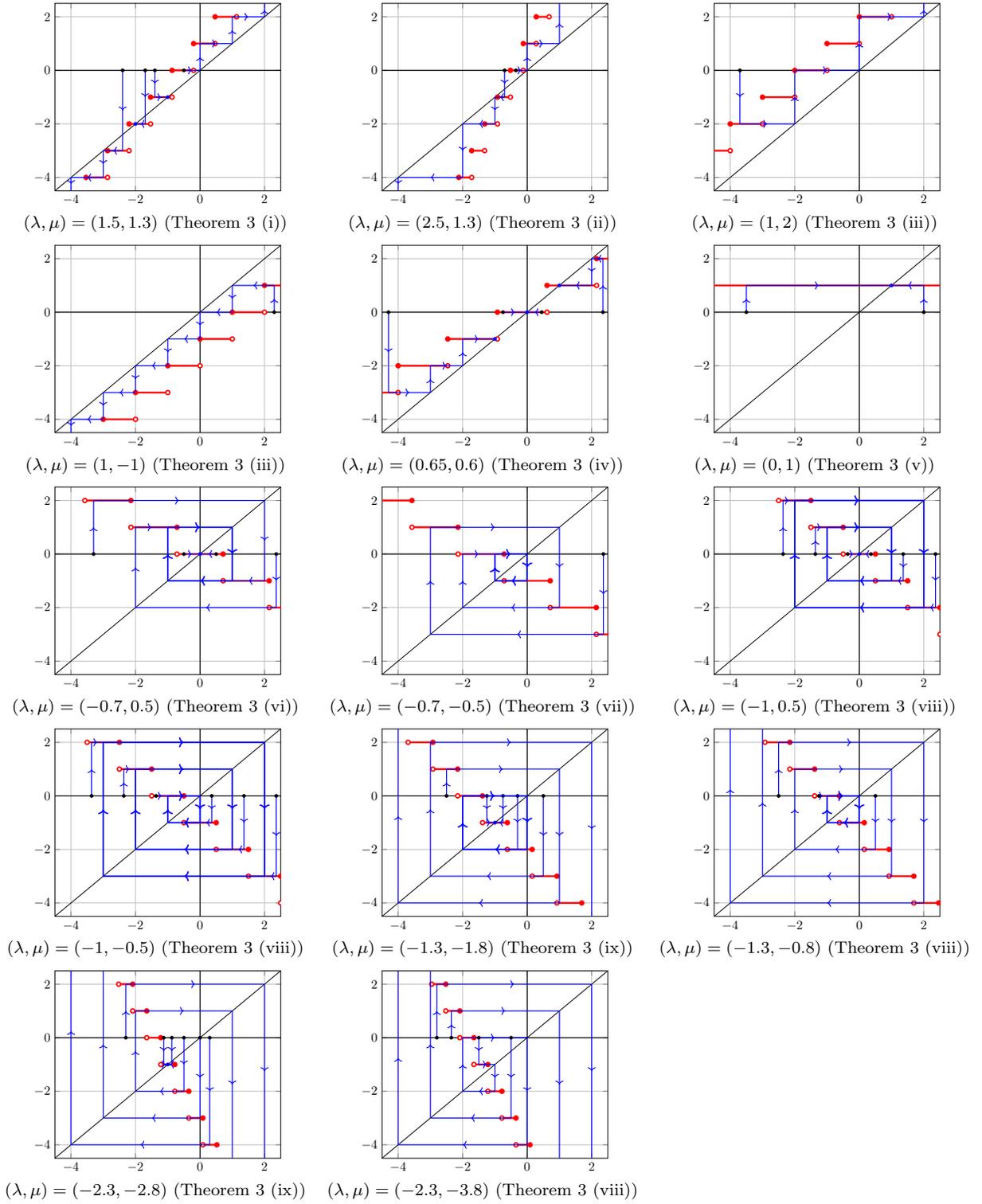
\begin{figure}\centering
\begin{tabular}{ccc}
		\begin{tikzpicture} [scale=0.55] 
\begin{axis}[
	xmin=-4.5,
	xmax=2.5,
	ymin=-4.5,
	ymax=2.5,
    xtick={-4,-2,0,2},
    ytick={-4,-2,0,2},
	xmajorgrids,
	ymajorgrids
]

\draw[thick] (axis cs:-4.5,0) -- (axis cs:2.5,0);
\draw[thick] (axis cs:0,-4.5) -- (axis cs:0,2.5);

\addplot[domain=-4.5:2.5] (\x,\x);

\addplot[red,ultra thick,domain=-3.533333333:-2.866666667] (\x,-4);
\draw[red,very thick,fill=red] (axis cs:-3.533333333,-4) circle (1.75pt);
\draw[red,very thick,fill=white] (axis cs:-2.866666667,-4) circle (1.75pt);

\addplot[red,ultra thick,domain=-2.866666667:-2.200000000] (\x,-3);
\draw[red,very thick,fill=red] (axis cs:-2.866666667,-3) circle (1.75pt);
\draw[red,very thick,fill=white] (axis cs:-2.200000000,-3) circle (1.75pt);

\addplot[red,ultra thick,domain=-2.200000000:-1.533333333] (\x,-2);
\draw[red,very thick,fill=red] (axis cs:-2.200000000,-2) circle (1.75pt);
\draw[red,very thick,fill=white] (axis cs:-1.533333333,-2) circle (1.75pt);

\addplot[red,ultra thick,domain=-1.533333333:-.8666666667] (\x,-1);
\draw[red,very thick,fill=red] (axis cs:-1.533333333,-1) circle (1.75pt);
\draw[red,very thick,fill=white] (axis cs:-.8666666667,-1) circle (1.75pt);

\addplot[red,ultra thick,domain=-.8666666667:-.2000000000] (\x,0);
\draw[red,very thick,fill=red] (axis cs:-.8666666667,0) circle (1.75pt);
\draw[red,very thick,fill=white] (axis cs:-.2000000000,0) circle (1.75pt);

\addplot[red,ultra thick,domain=-.2000000000:.4666666667] (\x,1);
\draw[red,very thick,fill=red] (axis cs:-.2000000000,1) circle (1.75pt);
\draw[red,very thick,fill=white] (axis cs:.4666666667,1) circle (1.75pt);

\addplot[red,ultra thick,domain=.4666666667:1.133333333] (\x,2);
\draw[red,very thick,fill=red] (axis cs:.4666666667,2) circle (1.75pt);
\draw[red,very thick,fill=white] (axis cs:1.133333333,2) circle (1.75pt);

\begin{scope}[decoration={
    markings,
    mark=at position 0.5 with {\arrow{>}}}
    ] 
\draw[thick,blue,postaction={decorate}] (axis cs:-0.5,0) -- (axis cs:0,0);
\draw[thick,blue,postaction={decorate}] (axis cs:0,0) -- (axis cs:0,1);
\draw[thick,blue,postaction={decorate}] (axis cs:0,1) -- (axis cs:1,1);
\draw[thick,blue,postaction={decorate}] (axis cs:1,1) -- (axis cs:1,2);
\draw[thick,blue,postaction={decorate}] (axis cs:1,2) -- (axis cs:2,2);
\draw[thick,blue,postaction={decorate}] (axis cs:2,2) -- (axis cs:2,2.5);

\draw[thick,blue,postaction={decorate}] (axis cs:-1.4,0) -- (axis cs:-1.4,-1);
\draw[thick,blue,postaction={decorate}] (axis cs:-1.4,-1) -- (axis cs:-1,-1);

\draw[thick,blue,postaction={decorate}] (axis cs:-1.7,0) -- (axis cs:-1.7,-2);
\draw[thick,blue,postaction={decorate}] (axis cs:-1.7,-2) -- (axis cs:-2,-2);

\draw[thick,blue,postaction={decorate}] (axis cs:-2.4,0) -- (axis cs:-2.4,-3);
\draw[thick,blue,postaction={decorate}] (axis cs:-2.4,-3) -- (axis cs:-3,-3);
\draw[thick,blue,postaction={decorate}] (axis cs:-3,-3) -- (axis cs:-3,-4);
\draw[thick,blue,postaction={decorate}] (axis cs:-3,-4) -- (axis cs:-4,-4);
\draw[thick,blue,postaction={decorate}] (axis cs:-4,-4) -- (axis cs:-4,-4.5);

\fill[black] (axis cs:-0.5,0) circle (1.625pt);
\fill[black] (axis cs:-1.4,0) circle (1.625pt);
\fill[black] (axis cs:-1.7,0) circle (1.625pt);
\fill[black] (axis cs:-2.4,0) circle (1.625pt);

\fill[blue] (axis cs:-1,-1) circle (1.625pt);
\fill[blue] (axis cs:-2,-2) circle (1.625pt);
\end{scope}
\end{axis}
\end{tikzpicture}&
		\begin{tikzpicture} [scale=0.55] 
\begin{axis}[
	xmin=-4.5,
	xmax=2.5,
	ymin=-4.5,
	ymax=2.5,
    xtick={-4,-2,0,2},
    ytick={-4,-2,0,2},
	xmajorgrids,
	ymajorgrids
]

\draw[thick] (axis cs:-4.5,0) -- (axis cs:2.5,0);
\draw[thick] (axis cs:0,-4.5) -- (axis cs:0,2.5);

\addplot[domain=-4.5:2.5] (\x,\x);

\addplot[red,ultra thick,domain=-2.120000000:-1.720000000] (\x,-4);
\draw[red,very thick,fill=red] (axis cs:-2.120000000,-4) circle (1.75pt);
\draw[red,very thick,fill=white] (axis cs:-1.720000000,-4) circle (1.75pt);

\addplot[red,ultra thick,domain=-1.720000000:-1.320000000] (\x,-3);
\draw[red,very thick,fill=red] (axis cs:-1.720000000,-3) circle (1.75pt);
\draw[red,very thick,fill=white] (axis cs:-1.320000000,-3) circle (1.75pt);

\addplot[red,ultra thick,domain=-1.320000000:-.9200000000] (\x,-2);
\draw[red,very thick,fill=red] (axis cs:-1.320000000,-2) circle (1.75pt);
\draw[red,very thick,fill=white] (axis cs:-.9200000000,-2) circle (1.75pt);

\addplot[red,ultra thick,domain=-.9200000000:-.5200000000] (\x,-1);
\draw[red,very thick,fill=red] (axis cs:-.9200000000,-1) circle (1.75pt);
\draw[red,very thick,fill=white] (axis cs:-.5200000000,-1) circle (1.75pt);

\addplot[red,ultra thick,domain=-.5200000000:-.1200000000] (\x,0);
\draw[red,very thick,fill=red] (axis cs:-.5200000000,0) circle (1.75pt);
\draw[red,very thick,fill=white] (axis cs:-.1200000000,0) circle (1.75pt);

\addplot[red,ultra thick,domain=-.1200000000:.2800000000] (\x,1);
\draw[red,very thick,fill=red] (axis cs:-.1200000000,1) circle (1.75pt);
\draw[red,very thick,fill=white] (axis cs:.2800000000,1) circle (1.75pt);

\addplot[red,ultra thick,domain=.2800000000:.6800000000] (\x,2);
\draw[red,very thick,fill=red] (axis cs:.2800000000,2) circle (1.75pt);
\draw[red,very thick,fill=white] (axis cs:.6800000000,2) circle (1.75pt);

\begin{scope}[decoration={
    markings,
    mark=at position 0.5 with {\arrow{>}}}
    ] 
\draw[thick,blue,postaction={decorate}] (axis cs:-0.35,0) -- (axis cs:0,0);
\draw[thick,blue,postaction={decorate}] (axis cs:0,0) -- (axis cs:0,1);
\draw[thick,blue,postaction={decorate}] (axis cs:0,1) -- (axis cs:1,1);
\draw[thick,blue,postaction={decorate}] (axis cs:1,1) -- (axis cs:1,2.5);

\draw[thick,blue,postaction={decorate}] (axis cs:-0.7,0) -- (axis cs:-0.7,-1);
\draw[thick,blue,postaction={decorate}] (axis cs:-0.7,-1) -- (axis cs:-1,-1);
\draw[thick,blue,postaction={decorate}] (axis cs:-1,-1) -- (axis cs:-1,-2);
\draw[thick,blue,postaction={decorate}] (axis cs:-1,-2) -- (axis cs:-2,-2);
\draw[thick,blue,postaction={decorate}] (axis cs:-2,-2) -- (axis cs:-2,-4);
\draw[thick,blue,postaction={decorate}] (axis cs:-2,-4) -- (axis cs:-4,-4);
\draw[thick,blue,postaction={decorate}] (axis cs:-4,-4) -- (axis cs:-4,-4.5);

\fill[black] (axis cs:-0.35,0) circle (1.625pt);
\fill[black] (axis cs:-0.7,0) circle (1.625pt);
\end{scope}
\end{axis}
\end{tikzpicture}&
		\begin{tikzpicture} [scale=0.55] 
\begin{axis}[
	xmin=-4.5,
	xmax=2.5,
	ymin=-4.5,
	ymax=2.5,
    xtick={-4,-2,0,2},
    ytick={-4,-2,0,2},
	xmajorgrids,
	ymajorgrids
]

\draw[thick] (axis cs:-4.5,0) -- (axis cs:2.5,0);
\draw[thick] (axis cs:0,-4.5) -- (axis cs:0,2.5);

\addplot[domain=-4.5:2.5] (\x,\x);

\addplot[red,ultra thick,domain=-6:-5] (\x,-4);
\draw[red,very thick,fill=red] (axis cs:-6,-4) circle (1.75pt);
\draw[red,very thick,fill=white] (axis cs:-5,-4) circle (1.75pt);

\addplot[red,ultra thick,domain=-5:-4] (\x,-3);
\draw[red,very thick,fill=red] (axis cs:-5,-3) circle (1.75pt);
\draw[red,very thick,fill=white] (axis cs:-4,-3) circle (1.75pt);

\addplot[red,ultra thick,domain=-4:-3] (\x,-2);
\draw[red,very thick,fill=red] (axis cs:-4,-2) circle (1.75pt);
\draw[red,very thick,fill=white] (axis cs:-3,-2) circle (1.75pt);

\addplot[red,ultra thick,domain=-3:-2] (\x,-1);
\draw[red,very thick,fill=red] (axis cs:-3,-1) circle (1.75pt);
\draw[red,very thick,fill=white] (axis cs:-2,-1) circle (1.75pt);

\addplot[red,ultra thick,domain=-2:-1] (\x,0);
\draw[red,very thick,fill=red] (axis cs:-2,0) circle (1.75pt);
\draw[red,very thick,fill=white] (axis cs:-1,0) circle (1.75pt);

\addplot[red,ultra thick,domain=-1:0] (\x,1);
\draw[red,very thick,fill=red] (axis cs:-1,1) circle (1.75pt);
\draw[red,very thick,fill=white] (axis cs:0,1) circle (1.75pt);

\addplot[red,ultra thick,domain=0:1] (\x,2);
\draw[red,very thick,fill=red] (axis cs:0,2) circle (1.75pt);
\draw[red,very thick,fill=white] (axis cs:1,2) circle (1.75pt);

\begin{scope}[decoration={
    markings,
    mark=at position 0.5 with {\arrow{>}}}
    ] 
\draw[thick,blue,postaction={decorate}] (axis cs:-3.7,0) -- (axis cs:-3.7,-2);
\draw[thick,blue,postaction={decorate}] (axis cs:-3.7,-2) -- (axis cs:-2,-2);
\draw[thick,blue,postaction={decorate}] (axis cs:-2,-2) -- (axis cs:-2,0);
\draw[thick,blue,postaction={decorate}] (axis cs:-2,0) -- (axis cs:0,0);
\draw[thick,blue,postaction={decorate}] (axis cs:0,0) -- (axis cs:0,2);
\draw[thick,blue,postaction={decorate}] (axis cs:0,2) -- (axis cs:2,2);
\draw[thick,blue,postaction={decorate}] (axis cs:2,2) -- (axis cs:2,2.5);

\fill[black] (axis cs:-3.7,0) circle (1.625pt);
\end{scope}
\end{axis}
\end{tikzpicture}\\[-0.1375cm]
		{\scriptsize $(\lambda,\mu)=(1.5,1.3)$ (Theorem \ref{thm:omegalimit} (i))} & {\scriptsize $(\lambda,\mu)=(2.5,1.3)$ (Theorem \ref{thm:omegalimit} (ii))} & {\scriptsize $(\lambda,\mu)=(1,2)$ (Theorem \ref{thm:omegalimit} (iii))}\\[0.15cm]
		
		\begin{tikzpicture} [scale=0.55] 
\begin{axis}[
	xmin=-4.5,
	xmax=2.5,
	ymin=-4.5,
	ymax=2.5,
    xtick={-4,-2,0,2},
    ytick={-4,-2,0,2},
	xmajorgrids,
	ymajorgrids
]

\draw[thick] (axis cs:-4.5,0) -- (axis cs:2.5,0);
\draw[thick] (axis cs:0,-4.5) -- (axis cs:0,2.5);

\addplot[domain=-4.5:2.5] (\x,\x);

\addplot[red,ultra thick,domain=-3:-2] (\x,-4);
\draw[red,very thick,fill=red] (axis cs:-3,-4) circle (1.75pt);
\draw[red,very thick,fill=white] (axis cs:-2,-4) circle (1.75pt);

\addplot[red,ultra thick,domain=-2:-1] (\x,-3);
\draw[red,very thick,fill=red] (axis cs:-2,-3) circle (1.75pt);
\draw[red,very thick,fill=white] (axis cs:-1,-3) circle (1.75pt);

\addplot[red,ultra thick,domain=-1:0] (\x,-2);
\draw[red,very thick,fill=red] (axis cs:-1,-2) circle (1.75pt);
\draw[red,very thick,fill=white] (axis cs:0,-2) circle (1.75pt);

\addplot[red,ultra thick,domain=0:1] (\x,-1);
\draw[red,very thick,fill=red] (axis cs:0,-1) circle (1.75pt);
\draw[red,very thick,fill=white] (axis cs:1,-1) circle (1.75pt);

\addplot[red,ultra thick,domain=1:2] (\x,0);
\draw[red,very thick,fill=red] (axis cs:1,0) circle (1.75pt);
\draw[red,very thick,fill=white] (axis cs:2,0) circle (1.75pt);

\addplot[red,ultra thick,domain=2:3] (\x,1);
\draw[red,very thick,fill=red] (axis cs:2,1) circle (1.75pt);
\draw[red,very thick,fill=white] (axis cs:3,1) circle (1.75pt);

\begin{scope}[decoration={
    markings,
    mark=at position 0.5 with {\arrow{>}}}
    ] 
\draw[thick,blue,postaction={decorate}] (axis cs:2.3,0) -- (axis cs:2.3,1);
\draw[thick,blue,postaction={decorate}] (axis cs:2.3,1) -- (axis cs:1,1);
\draw[thick,blue,postaction={decorate}] (axis cs:1,1) -- (axis cs:1,0);
\draw[thick,blue,postaction={decorate}] (axis cs:1,0) -- (axis cs:0,0);
\draw[thick,blue,postaction={decorate}] (axis cs:0,0) -- (axis cs:0,-1);
\draw[thick,blue,postaction={decorate}] (axis cs:0,-1) -- (axis cs:-1,-1);
\draw[thick,blue,postaction={decorate}] (axis cs:-1,-1) -- (axis cs:-1,-2);
\draw[thick,blue,postaction={decorate}] (axis cs:-1,-2) -- (axis cs:-2,-2);
\draw[thick,blue,postaction={decorate}] (axis cs:-2,-2) -- (axis cs:-2,-3);
\draw[thick,blue,postaction={decorate}] (axis cs:-2,-3) -- (axis cs:-3,-3);
\draw[thick,blue,postaction={decorate}] (axis cs:-3,-3) -- (axis cs:-3,-4);
\draw[thick,blue,postaction={decorate}] (axis cs:-3,-4) -- (axis cs:-4,-4);
\draw[thick,blue,postaction={decorate}] (axis cs:-4,-4) -- (axis cs:-4,-4.5);

\fill[black] (axis cs:2.3,0) circle (1.625pt);
\end{scope}
\end{axis}
\end{tikzpicture}&
		\begin{tikzpicture} [scale=0.55] 
\begin{axis}[
	xmin=-4.5,
	xmax=2.5,
	ymin=-4.5,
	ymax=2.5,
    xtick={-4,-2,0,2},
    ytick={-4,-2,0,2},
	xmajorgrids,
	ymajorgrids
]

\draw[thick] (axis cs:-4.5,0) -- (axis cs:2.5,0);
\draw[thick] (axis cs:0,-4.5) -- (axis cs:0,2.5);

\addplot[domain=-4.5:2.5] (\x,\x);

\addplot[red,ultra thick,domain=-5.538461538:-4] (\x,-3);
\draw[red,very thick,fill=red] (axis cs:-5.538461538,-3) circle (1.75pt);
\draw[red,very thick,fill=white] (axis cs:-4,-3) circle (1.75pt);

\addplot[red,ultra thick,domain=-4:-2.461538462] (\x,-2);
\draw[red,very thick,fill=red] (axis cs:-4,-2) circle (1.75pt);
\draw[red,very thick,fill=white] (axis cs:-2.461538462,-2) circle (1.75pt);

\addplot[red,ultra thick,domain=-2.461538462:-.9230769231] (\x,-1);
\draw[red,very thick,fill=red] (axis cs:-2.461538462,-1) circle (1.75pt);
\draw[red,very thick,fill=white] (axis cs:-.9230769231,-1) circle (1.75pt);

\addplot[red,ultra thick,domain=-.9230769231:.6153846154] (\x,0);
\draw[red,very thick,fill=red] (axis cs:-.9230769231,0) circle (1.75pt);
\draw[red,very thick,fill=white] (axis cs:.6153846154,0) circle (1.75pt);

\addplot[red,ultra thick,domain=.6153846154:2.153846154] (\x,1);
\draw[red,very thick,fill=red] (axis cs:.6153846154,1) circle (1.75pt);
\draw[red,very thick,fill=white] (axis cs:2.153846154,1) circle (1.75pt);

\addplot[red,ultra thick,domain=2.153846154:3.692307692] (\x,2);
\draw[red,very thick,fill=red] (axis cs:2.153846154,2) circle (1.75pt);
\draw[red,very thick,fill=white] (axis cs:3.692307692,2) circle (1.75pt);

\begin{scope}[decoration={
    markings,
    mark=at position 0.5 with {\arrow{>}}}
    ] 
\draw[thick,blue,postaction={decorate}] (axis cs:2.35,0) -- (axis cs:2.35,2);
\draw[thick,blue,postaction={decorate}] (axis cs:2.35,2) -- (axis cs:2,2);
\draw[thick,blue,postaction={decorate}] (axis cs:2,2) -- (axis cs:2,1);
\draw[thick,blue,postaction={decorate}] (axis cs:2,1) -- (axis cs:1,1);

\draw[thick,blue,postaction={decorate}] (axis cs:-0.75,0) -- (axis cs:0,0);
\draw[thick,blue,postaction={decorate}] (axis cs:0.45,0) -- (axis cs:0,0);

\draw[thick,blue,postaction={decorate}] (axis cs:-4.3,0) -- (axis cs:-4.3,-3);
\draw[thick,blue,postaction={decorate}] (axis cs:-4.3,-3) -- (axis cs:-3,-3);
\draw[thick,blue,postaction={decorate}] (axis cs:-3,-3) -- (axis cs:-3,-2);
\draw[thick,blue,postaction={decorate}] (axis cs:-3,-2) -- (axis cs:-2,-2);
\draw[thick,blue,postaction={decorate}] (axis cs:-2,-2) -- (axis cs:-2,-1);
\draw[thick,blue,postaction={decorate}] (axis cs:-2,-1) -- (axis cs:-1,-1);

\fill[black] (axis cs:-4.3,0) circle (1.625pt);
\fill[black] (axis cs:-0.75,0) circle (1.625pt);
\fill[black] (axis cs:0.45,0) circle (1.625pt);
\fill[black] (axis cs:2.35,0) circle (1.625pt);

\fill[blue] (axis cs:1,1) circle (1.625pt);
\fill[blue] (axis cs:0,0) circle (1.625pt);
\fill[blue] (axis cs:-1,-1) circle (1.625pt);
\end{scope}
\end{axis}
\end{tikzpicture}&
		\begin{tikzpicture} [scale=0.55] 
\begin{axis}[
	xmin=-4.5,
	xmax=2.5,
	ymin=-4.5,
	ymax=2.5,
    xtick={-4,-2,0,2},
    ytick={-4,-2,0,2},
	xmajorgrids,
	ymajorgrids
]

\draw[thick] (axis cs:-4.5,0) -- (axis cs:2.5,0);
\draw[thick] (axis cs:0,-4.5) -- (axis cs:0,2.5);

\addplot[domain=-4.5:2.5] (\x,\x);

\addplot[red,ultra thick,domain=-4.5:2.5] (\x,1);

\begin{scope}[decoration={
    markings,
    mark=at position 0.5 with {\arrow{>}}}
    ] 
\draw[thick,blue,postaction={decorate}] (axis cs:2,0) -- (axis cs:2,1);
\draw[thick,blue,postaction={decorate}] (axis cs:2,1) -- (axis cs:1,1);

\draw[thick,blue,postaction={decorate}] (axis cs:-3.5,0) -- (axis cs:-3.5,1);
\draw[thick,blue,postaction={decorate}] (axis cs:-3.5,1) -- (axis cs:1,1);

\fill[black] (axis cs:2,0) circle (1.625pt);
\fill[black] (axis cs:-3.5,0) circle (1.625pt);

\fill[blue] (axis cs:1,1) circle (1.625pt);
\end{scope}
\end{axis}
\end{tikzpicture}\\[-0.1375cm]
		{\scriptsize $(\lambda,\mu)=(1,-1)$ (Theorem \ref{thm:omegalimit} (iii))} & {\scriptsize $(\lambda,\mu)=(0.65,0.6)$ (Theorem \ref{thm:omegalimit} (iv))} & {\scriptsize $(\lambda,\mu)=(0,1)$ (Theorem \ref{thm:omegalimit} (v))}\\[0.15cm]

		\begin{tikzpicture} [scale=0.55] 
\begin{axis}[
	xmin=-4.5,
	xmax=2.5,
	ymin=-4.5,
	ymax=2.5,
    xtick={-4,-2,0,2},
    ytick={-4,-2,0,2},
	xmajorgrids,
	ymajorgrids
]

\draw[thick] (axis cs:-4.5,0) -- (axis cs:2.5,0);
\draw[thick] (axis cs:0,-4.5) -- (axis cs:0,2.5);

\addplot[domain=-4.5:2.5] (\x,\x);

\addplot[red,ultra thick,domain=2.142857143:3.571428571] (\x,-2);
\draw[red,very thick,fill=white] (axis cs:2.142857143,-2) circle (1.75pt);
\draw[red,very thick,fill=red] (axis cs:3.571428571,-2) circle (1.75pt);

\addplot[red,ultra thick,domain=.7142857143:2.142857143] (\x,-1);
\draw[red,very thick,fill=white] (axis cs:.7142857143,-1) circle (1.75pt);
\draw[red,very thick,fill=red] (axis cs:2.142857143,-1) circle (1.75pt);

\addplot[red,ultra thick,domain=-.7142857143:.7142857143] (\x,0);
\draw[red,very thick,fill=white] (axis cs:-.7142857143,0) circle (1.75pt);
\draw[red,very thick,fill=red] (axis cs:.7142857143,0) circle (1.75pt);

\addplot[red,ultra thick,domain=-2.142857143:-.7142857143] (\x,1);
\draw[red,very thick,fill=white] (axis cs:-2.142857143,1) circle (1.75pt);
\draw[red,very thick,fill=red] (axis cs:-.7142857143,1) circle (1.75pt);

\addplot[red,ultra thick,domain=-3.571428571:-2.142857143] (\x,2);
\draw[red,very thick,fill=white] (axis cs:-3.571428571,2) circle (1.75pt);
\draw[red,very thick,fill=red] (axis cs:-2.142857143,2) circle (1.75pt);

\begin{scope}[decoration={
    markings,
    mark=at position 0.5 with {\arrow{>}}}
    ] 
\draw[thick,blue,postaction={decorate}] (axis cs:0.5,0) -- (axis cs:0,0);

\draw[thick,blue,postaction={decorate}] (axis cs:-0.5,0) -- (axis cs:0,0);

\draw[thick,blue,postaction={decorate}] (axis cs:-3.3,0) -- (axis cs:-3.3,2);
\draw[thick,blue,postaction={decorate}] (axis cs:-3.3,2) -- (axis cs:2,2);
\draw[thick,blue,postaction={decorate}] (axis cs:2,2) -- (axis cs:2,-1);
\draw[thick,blue,postaction={decorate}] (axis cs:2,-1) -- (axis cs:1,-1);

\draw[thick,blue,postaction={decorate}] (axis cs:2.3625,0) -- (axis cs:2.3625,-2);
\draw[thick,blue,postaction={decorate}] (axis cs:2.3625,-2) -- (axis cs:-2,-2);
\draw[thick,blue,postaction={decorate}] (axis cs:-2,-2) -- (axis cs:-2,1);
\draw[thick,blue,postaction={decorate}] (axis cs:-2,1) -- (axis cs:-1,1);

\fill[black] (axis cs:0.5,0) circle (1.625pt);
\fill[black] (axis cs:-0.5,0) circle (1.625pt);
\fill[black] (axis cs:2.3625,0) circle (1.625pt);
\fill[black] (axis cs:-3.3,0) circle (1.625pt);

\fill[blue] (axis cs:0,0) circle (1.625pt);

\draw[very thick,blue,postaction={decorate}] (axis cs:1,-1) -- (axis cs:-1,-1);
\draw[very thick,blue,postaction={decorate}] (axis cs:-1,-1) -- (axis cs:-1,1);
\draw[very thick,blue,postaction={decorate}] (axis cs:-1,1) -- (axis cs:1,1);
\draw[very thick,blue,postaction={decorate}] (axis cs:1,1) -- (axis cs:1,-1);
\end{scope}
\end{axis}
\end{tikzpicture}&\begin{tikzpicture} [scale=0.55] 
\begin{axis}[
	xmin=-4.5,
	xmax=2.5,
	ymin=-4.5,
	ymax=2.5,
    xtick={-4,-2,0,2},
    ytick={-4,-2,0,2},
	xmajorgrids,
	ymajorgrids
]

\draw[thick] (axis cs:-4.5,0) -- (axis cs:2.5,0);
\draw[thick] (axis cs:0,-4.5) -- (axis cs:0,2.5);

\addplot[domain=-4.5:2.5] (\x,\x);

\addplot[red,ultra thick,domain=2.142857143:3.571428571] (\x,-3);
\draw[red,very thick,fill=white] (axis cs:2.142857143,-3) circle (1.75pt);
\draw[red,very thick,fill=red] (axis cs:3.571428571,-3) circle (1.75pt);

\addplot[red,ultra thick,domain=.7142857143:2.142857143] (\x,-2);
\draw[red,very thick,fill=white] (axis cs:.7142857143,-2) circle (1.75pt);
\draw[red,very thick,fill=red] (axis cs:2.142857143,-2) circle (1.75pt);

\addplot[red,ultra thick,domain=-.7142857143:.7142857143] (\x,-1);
\draw[red,very thick,fill=white] (axis cs:-.7142857143,-1) circle (1.75pt);
\draw[red,very thick,fill=red] (axis cs:.7142857143,-1) circle (1.75pt);

\addplot[red,ultra thick,domain=-2.142857143:-.7142857143] (\x,0);
\draw[red,very thick,fill=white] (axis cs:-2.142857143,0) circle (1.75pt);
\draw[red,very thick,fill=red] (axis cs:-.7142857143,0) circle (1.75pt);

\addplot[red,ultra thick,domain=-3.571428571:-2.142857143] (\x,1);
\draw[red,very thick,fill=white] (axis cs:-3.571428571,1) circle (1.75pt);
\draw[red,very thick,fill=red] (axis cs:-2.142857143,1) circle (1.75pt);

\addplot[red,ultra thick,domain=-5:-3.571428571] (\x,2);
\draw[red,very thick,fill=white] (axis cs:-5,2) circle (1.75pt);
\draw[red,very thick,fill=red] (axis cs:-3.571428571,2) circle (1.75pt);

\begin{scope}[decoration={
    markings,
    mark=at position 0.5 with {\arrow{>}}}
    ]
\draw[thick,blue,postaction={decorate}] (axis cs:2.3625,0) -- (axis cs:2.3625,-3);
\draw[thick,blue,postaction={decorate}] (axis cs:2.3625,-3) -- (axis cs:-3,-3);
\draw[thick,blue,postaction={decorate}] (axis cs:-3,-3) -- (axis cs:-3,1);
\draw[thick,blue,postaction={decorate}] (axis cs:-3,1) -- (axis cs:1,1);
\draw[thick,blue,postaction={decorate}] (axis cs:1,1) -- (axis cs:1,-2);
\draw[thick,blue,postaction={decorate}] (axis cs:1,-2) -- (axis cs:-2,-2);
\draw[thick,blue,postaction={decorate}] (axis cs:-2,-2) -- (axis cs:-2,0);
\draw[thick,blue,postaction={decorate}] (axis cs:-2,0) -- (axis cs:-1,0);

\fill[black] (axis cs:2.3625,0) circle (1.625pt);

\draw[very thick,blue,postaction={decorate}] (axis cs:-1,0) -- (axis cs:0,0);
\draw[very thick,blue,postaction={decorate}] (axis cs:0,0) -- (axis cs:0,-1);
\draw[very thick,blue,postaction={decorate}] (axis cs:0,-1) -- (axis cs:-1,-1);
\draw[very thick,blue,postaction={decorate}] (axis cs:-1,-1) -- (axis cs:-1,0);
\end{scope}
\end{axis}
\end{tikzpicture}&
		\begin{tikzpicture} [scale=0.55] 
\begin{axis}[
	xmin=-4.5,
	xmax=2.5,
	ymin=-4.5,
	ymax=2.5,
    xtick={-4,-2,0,2},
    ytick={-4,-2,0,2},
	xmajorgrids,
	ymajorgrids
]

\draw[thick] (axis cs:-4.5,0) -- (axis cs:2.5,0);
\draw[thick] (axis cs:0,-4.5) -- (axis cs:0,2.5);

\addplot[domain=-4.5:2.5] (\x,\x);

\draw[red,very thick,fill=white] (axis cs:2.5,-3) circle (1.75pt);

\addplot[red,ultra thick,domain=1.5:2.5] (\x,-2);
\draw[red,very thick,fill=white] (axis cs:1.5,-2) circle (1.75pt);
\draw[red,very thick,fill=red] (axis cs:2.5,-2) circle (1.75pt);

\addplot[red,ultra thick,domain=0.5:1.5] (\x,-1);
\draw[red,very thick,fill=white] (axis cs:0.5,-1) circle (1.75pt);
\draw[red,very thick,fill=red] (axis cs:1.5,-1) circle (1.75pt);

\addplot[red,ultra thick,domain=-0.5:0.5] (\x,0);
\draw[red,very thick,fill=white] (axis cs:-0.5,0) circle (1.75pt);
\draw[red,very thick,fill=red] (axis cs:0.5,0) circle (1.75pt);

\addplot[red,ultra thick,domain=-1.5:-0.5] (\x,1);
\draw[red,very thick,fill=white] (axis cs:-1.5,1) circle (1.75pt);
\draw[red,very thick,fill=red] (axis cs:-0.5,1) circle (1.75pt);

\addplot[red,ultra thick,domain=-2.5:-1.5] (\x,2);
\draw[red,very thick,fill=white] (axis cs:-2.5,2) circle (1.75pt);
\draw[red,very thick,fill=red] (axis cs:-1.5,2) circle (1.75pt);

\begin{scope}[decoration={
    markings,
    mark=at position 0.5 with {\arrow{>}}}
    ]
\draw[thick,blue,postaction={decorate}] (axis cs:2.3625,0) -- (axis cs:2.3625,-2);
\draw[thick,blue,postaction={decorate}] (axis cs:2.3625,-2) -- (axis cs:2,-2);

\draw[thick,blue,postaction={decorate}] (axis cs:1.3625,0) -- (axis cs:1.3625,-1);
\draw[thick,blue,postaction={decorate}] (axis cs:1.3625,-1) -- (axis cs:1,-1);

\draw[thick,blue,postaction={decorate}] (axis cs:0.3625,0) -- (axis cs:0,0);

\draw[thick,blue,postaction={decorate}] (axis cs:-0.3625,0) -- (axis cs:0,0);

\draw[thick,blue,postaction={decorate}] (axis cs:-1.3625,0) -- (axis cs:-1.3625,1);
\draw[thick,blue,postaction={decorate}] (axis cs:-1.3625,1) -- (axis cs:-1,1);

\draw[thick,blue,postaction={decorate}] (axis cs:-2.3625,0) -- (axis cs:-2.3625,2);
\draw[thick,blue,postaction={decorate}] (axis cs:-2.3625,2) -- (axis cs:-2,2);

\fill[blue] (axis cs:0,0) circle (1.625pt);

\fill[black] (axis cs:2.3625,0) circle (1.625pt);
\fill[black] (axis cs:1.3625,0) circle (1.625pt);
\fill[black] (axis cs:0.3625,0) circle (1.625pt);
\fill[black] (axis cs:-0.3625,0) circle (1.625pt);
\fill[black] (axis cs:-1.3625,0) circle (1.625pt);
\fill[black] (axis cs:-2.3625,0) circle (1.625pt);

\draw[very thick,blue,postaction={decorate}] (axis cs:-1,1) -- (axis cs:1,1);
\draw[very thick,blue,postaction={decorate}] (axis cs:1,1) -- (axis cs:1,-1);
\draw[very thick,blue,postaction={decorate}] (axis cs:1,-1) -- (axis cs:-1,-1);
\draw[very thick,blue,postaction={decorate}] (axis cs:-1,-1) -- (axis cs:-1,1);

\draw[very thick,blue,postaction={decorate}] (axis cs:-2,-2) -- (axis cs:-2,2);
\draw[very thick,blue,postaction={decorate}] (axis cs:-2,2) -- (axis cs:2,2);
\draw[very thick,blue,postaction={decorate}] (axis cs:2,2) -- (axis cs:2,-2);
\draw[very thick,blue,postaction={decorate}] (axis cs:2,-2) -- (axis cs:-2,-2);

\end{scope}
\end{axis}
\end{tikzpicture}\\[-0.1375cm]
		{\scriptsize $(\lambda,\mu)=(-0.7,0.5)$ (Theorem \ref{thm:omegalimit} (vi))} & {\scriptsize $(\lambda,\mu)=(-0.7,-0.5)$ (Theorem \ref{thm:omegalimit} (vii))} & {\scriptsize $(\lambda,\mu)=(-1,0.5)$ (Theorem \ref{thm:omegalimit} (viii))}\\[0.15cm]

\begin{tikzpicture} [scale=0.55] 
\begin{axis}[
	xmin=-4.5,
	xmax=2.5,
	ymin=-4.5,
	ymax=2.5,
    xtick={-4,-2,0,2},
    ytick={-4,-2,0,2},
	xmajorgrids,
	ymajorgrids
]

\draw[thick] (axis cs:-4.5,0) -- (axis cs:2.5,0);
\draw[thick] (axis cs:0,-4.5) -- (axis cs:0,2.5);

\addplot[domain=-4.5:2.5] (\x,\x);

\draw[red,very thick,fill=white] (axis cs:2.5,-4) circle (1.75pt);

\addplot[red,ultra thick,domain=1.5:2.5] (\x,-3);
\draw[red,very thick,fill=white] (axis cs:1.5,-3) circle (1.75pt);
\draw[red,very thick,fill=red] (axis cs:2.5,-3) circle (1.75pt);

\addplot[red,ultra thick,domain=0.5:1.5] (\x,-2);
\draw[red,very thick,fill=white] (axis cs:0.5,-2) circle (1.75pt);
\draw[red,very thick,fill=red] (axis cs:1.5,-2) circle (1.75pt);

\addplot[red,ultra thick,domain=-0.5:0.5] (\x,-1);
\draw[red,very thick,fill=white] (axis cs:-0.5,-1) circle (1.75pt);
\draw[red,very thick,fill=red] (axis cs:0.5,-1) circle (1.75pt);

\addplot[red,ultra thick,domain=-1.5:-0.5] (\x,0);
\draw[red,very thick,fill=white] (axis cs:-1.5,0) circle (1.75pt);
\draw[red,very thick,fill=red] (axis cs:-0.5,0) circle (1.75pt);

\addplot[red,ultra thick,domain=-2.5:-1.5] (\x,1);
\draw[red,very thick,fill=white] (axis cs:-2.5,1) circle (1.75pt);
\draw[red,very thick,fill=red] (axis cs:-1.5,1) circle (1.75pt);

\addplot[red,ultra thick,domain=-3.5:-2.5] (\x,2);
\draw[red,very thick,fill=white] (axis cs:-3.5,2) circle (1.75pt);
\draw[red,very thick,fill=red] (axis cs:-2.5,2) circle (1.75pt);

\begin{scope}[decoration={
    markings,
    mark=at position 0.5 with {\arrow{>}}}
    ]
\draw[thick,blue,postaction={decorate}] (axis cs:2.3625,0) -- (axis cs:2.3625,-3);
\draw[thick,blue,postaction={decorate}] (axis cs:2.3625,-3) -- (axis cs:2,-3);

\draw[thick,blue,postaction={decorate}] (axis cs:1.3625,0) -- (axis cs:1.3625,-2);
\draw[thick,blue,postaction={decorate}] (axis cs:1.3625,-2) -- (axis cs:1,-2);

\draw[thick,blue,postaction={decorate}] (axis cs:0.3625,0) -- (axis cs:0.3625,-1);
\draw[thick,blue,postaction={decorate}] (axis cs:0.3625,-1) -- (axis cs:0,-1);

\draw[thick,blue,postaction={decorate}] (axis cs:-1.3625,0) -- (axis cs:-1,0);

\draw[thick,blue,postaction={decorate}] (axis cs:-2.3625,0) -- (axis cs:-2.3625,1);
\draw[thick,blue,postaction={decorate}] (axis cs:-2.3625,1) -- (axis cs:-2,1);

\draw[thick,blue,postaction={decorate}] (axis cs:-3.3625,0) -- (axis cs:-3.3625,2);
\draw[thick,blue,postaction={decorate}] (axis cs:-3.3625,2) -- (axis cs:-3,2);

\fill[black] (axis cs:2.3625,0) circle (1.625pt);
\fill[black] (axis cs:1.3625,0) circle (1.625pt);
\fill[black] (axis cs:0.3625,0) circle (1.625pt);
\fill[black] (axis cs:-1.3625,0) circle (1.625pt);
\fill[black] (axis cs:-2.3625,0) circle (1.625pt);
\fill[black] (axis cs:-3.3625,0) circle (1.625pt);

\draw[very thick,blue,postaction={decorate}] (axis cs:-1,0) -- (axis cs:0,0);
\draw[very thick,blue,postaction={decorate}] (axis cs:0,0) -- (axis cs:0,-1);
\draw[very thick,blue,postaction={decorate}] (axis cs:0,-1) -- (axis cs:-1,-1);
\draw[very thick,blue,postaction={decorate}] (axis cs:-1,-1) -- (axis cs:-1,0);

\draw[very thick,blue,postaction={decorate}] (axis cs:-2,-2) -- (axis cs:-2,1);
\draw[very thick,blue,postaction={decorate}] (axis cs:-2,1) -- (axis cs:1,1);
\draw[very thick,blue,postaction={decorate}] (axis cs:1,1) -- (axis cs:1,-2);
\draw[very thick,blue,postaction={decorate}] (axis cs:1,-2) -- (axis cs:-2,-2);

\draw[very thick,blue,postaction={decorate}] (axis cs:-3,-3) -- (axis cs:-3,2);
\draw[very thick,blue,postaction={decorate}] (axis cs:-3,2) -- (axis cs:2,2);
\draw[very thick,blue,postaction={decorate}] (axis cs:2,2) -- (axis cs:2,-3);
\draw[very thick,blue,postaction={decorate}] (axis cs:2,-3) -- (axis cs:-3,-3);
\end{scope}
\end{axis}
\end{tikzpicture}&\begin{tikzpicture} [scale=0.55] 
\begin{axis}[
	xmin=-4.5,
	xmax=2.5,
	ymin=-4.5,
	ymax=2.5,
    xtick={-4,-2,0,2},
    ytick={-4,-2,0,2},
	xmajorgrids,
	ymajorgrids
]

\draw[thick] (axis cs:-4.5,0) -- (axis cs:2.5,0);
\draw[thick] (axis cs:0,-4.5) -- (axis cs:0,2.5);

\addplot[domain=-4.5:2.5] (\x,\x);
 
\addplot[red,ultra thick,domain=-3.692307692:-2.923076923] (\x,2);
\draw[red,very thick,fill=white] (axis cs:-3.692307692,2) circle (1.75pt);
\draw[red,very thick,fill=red] (axis cs:-2.923076923,2) circle (1.75pt);
 
\addplot[red,ultra thick,domain=-2.923076923:-2.153846154] (\x,1);
\draw[red,very thick,fill=white] (axis cs:-2.923076923,1) circle (1.75pt);
\draw[red,very thick,fill=red] (axis cs:-2.153846154,1) circle (1.75pt);
 
\addplot[red,ultra thick,domain=-2.153846154:-1.384615385] (\x,0);
\draw[red,very thick,fill=white] (axis cs:-2.153846154,0) circle (1.75pt);
\draw[red,very thick,fill=red] (axis cs:-1.384615385,0) circle (1.75pt);
 
\addplot[red,ultra thick,domain=-1.384615385:-.6153846154] (\x,-1);
\draw[red,very thick,fill=white] (axis cs:-1.384615385,-1) circle (1.75pt);
\draw[red,very thick,fill=red] (axis cs:-.6153846154,-1) circle (1.75pt);
 
\addplot[red,ultra thick,domain=-.6153846154:.1538461538] (\x,-2);
\draw[red,very thick,fill=white] (axis cs:-.6153846154,-2) circle (1.75pt);
\draw[red,very thick,fill=red] (axis cs:.1538461538,-2) circle (1.75pt);
 
\addplot[red,ultra thick,domain=.1538461538:.9230769231] (\x,-3);
\draw[red,very thick,fill=white] (axis cs:.1538461538,-3) circle (1.75pt);
\draw[red,very thick,fill=red] (axis cs:.9230769231,-3) circle (1.75pt);
 
\addplot[red,ultra thick,domain=.9230769231:1.692307692] (\x,-4);
\draw[red,very thick,fill=white] (axis cs:.9230769231,-4) circle (1.75pt);
\draw[red,very thick,fill=red] (axis cs:1.692307692,-4) circle (1.75pt);

\begin{scope}[decoration={
    markings,
    mark=at position 0.5 with {\arrow{>}}}
    ]
\draw[thick,blue,postaction={decorate}] (axis cs:-2.5,0) -- (axis cs:-2.5,1);
\draw[thick,blue,postaction={decorate}] (axis cs:-2.5,1) -- (axis cs:1,1);
\draw[thick,blue,postaction={decorate}] (axis cs:1,1) -- (axis cs:1,-4);
\draw[thick,blue,postaction={decorate}] (axis cs:1,-4) -- (axis cs:-4,-4);
\draw[thick,blue,postaction={decorate}] (axis cs:-4,-4) -- (axis cs:-4,2.5);

\draw[thick,blue,postaction={decorate}] (axis cs:0.5,0) -- (axis cs:0.5,-3);
\draw[thick,blue,postaction={decorate}] (axis cs:0.5,-3) -- (axis cs:-3,-3);
\draw[thick,blue,postaction={decorate}] (axis cs:-3,-3) -- (axis cs:-3,2);
\draw[thick,blue,postaction={decorate}] (axis cs:-3,2) -- (axis cs:2,2);
\draw[thick,blue,postaction={decorate}] (axis cs:2,2) -- (axis cs:2,-4.5);

\draw[thick,blue,postaction={decorate}] (axis cs:-0.3,0) -- (axis cs:-0.3,-2);

\draw[thick,blue,postaction={decorate}] (axis cs:-1.25,0) -- (axis cs:-1.25,-1);
\draw[thick,blue,postaction={decorate}] (axis cs:-1.25,-1) -- (axis cs:-1,-1);

\draw[thick,blue,postaction={decorate}] (axis cs:-0.75,0) -- (axis cs:-0.75,-1);
\draw[thick,blue,postaction={decorate}] (axis cs:-0.75,-1) -- (axis cs:-1,-1);

\fill[black] (axis cs:-2.5,0) circle (1.625pt);
\fill[black] (axis cs:0.5,0) circle (1.625pt);
\fill[black] (axis cs:-0.3,0) circle (1.625pt);
\fill[black] (axis cs:-1.25,0) circle (1.625pt);
\fill[black] (axis cs:-0.75,0) circle (1.625pt);

\fill[blue] (axis cs:-1,-1) circle (1.625pt);

\draw[very thick,blue,postaction={decorate}] (axis cs:-2,0) -- (axis cs:0,0);
\draw[very thick,blue,postaction={decorate}] (axis cs:0,0) -- (axis cs:0,-2);
\draw[very thick,blue,postaction={decorate}] (axis cs:0,-2) -- (axis cs:-2,-2);
\draw[very thick,blue,postaction={decorate}] (axis cs:-2,-2) -- (axis cs:-2,0);
\end{scope}
\end{axis}
\end{tikzpicture}&
		\begin{tikzpicture} [scale=0.55] 
\begin{axis}[
	xmin=-4.5,
	xmax=2.5,
	ymin=-4.5,
	ymax=2.5,
    xtick={-4,-2,0,2},
    ytick={-4,-2,0,2},
	xmajorgrids,
	ymajorgrids
]

\draw[thick] (axis cs:-4.5,0) -- (axis cs:2.5,0);
\draw[thick] (axis cs:0,-4.5) -- (axis cs:0,2.5);

\addplot[domain=-4.5:2.5] (\x,\x);
 
\addplot[red,ultra thick,domain=-2.923076923:-2.153846154] (\x,2);
\draw[red,very thick,fill=white] (axis cs:-2.923076923,2) circle (1.75pt);
\draw[red,very thick,fill=red] (axis cs:-2.153846154,2) circle (1.75pt);

\addplot[red,ultra thick,domain=-2.153846154:-1.384615385] (\x,1);
\draw[red,very thick,fill=white] (axis cs:-2.153846154,1) circle (1.75pt);
\draw[red,very thick,fill=red] (axis cs:-1.384615385,1) circle (1.75pt);

\addplot[red,ultra thick,domain=-1.384615385:-.6153846154] (\x,0);
\draw[red,very thick,fill=white] (axis cs:-1.384615385,0) circle (1.75pt);
\draw[red,very thick,fill=red] (axis cs:-.6153846154,0) circle (1.75pt);

\addplot[red,ultra thick,domain=-.6153846154:.1538461538] (\x,-1);
\draw[red,very thick,fill=white] (axis cs:-.6153846154,-1) circle (1.75pt);
\draw[red,very thick,fill=red] (axis cs:.1538461538,-1) circle (1.75pt);

\addplot[red,ultra thick,domain=.1538461538:.9230769231] (\x,-2);
\draw[red,very thick,fill=white] (axis cs:.1538461538,-2) circle (1.75pt);
\draw[red,very thick,fill=red] (axis cs:.9230769231,-2) circle (1.75pt);

\addplot[red,ultra thick,domain=.9230769231:1.692307692] (\x,-3);
\draw[red,very thick,fill=white] (axis cs:.9230769231,-3) circle (1.75pt);
\draw[red,very thick,fill=red] (axis cs:1.692307692,-3) circle (1.75pt);

\addplot[red,ultra thick,domain=1.692307692:2.461538462] (\x,-4);
\draw[red,very thick,fill=white] (axis cs:1.692307692,-4) circle (1.75pt);
\draw[red,very thick,fill=red] (axis cs:2.461538462,-4) circle (1.75pt);

\begin{scope}[decoration={
    markings,
    mark=at position 0.5 with {\arrow{>}}}
    ]
\draw[thick,blue,postaction={decorate}] (axis cs:0.5,0) -- (axis cs:0.5,-2);
\draw[thick,blue,postaction={decorate}] (axis cs:0.5,-2) -- (axis cs:-2,-2);
\draw[thick,blue,postaction={decorate}] (axis cs:-2,-2) -- (axis cs:-2,1);
\draw[thick,blue,postaction={decorate}] (axis cs:-2,1) -- (axis cs:1,1);
\draw[thick,blue,postaction={decorate}] (axis cs:1,1) -- (axis cs:1,-3);
\draw[thick,blue,postaction={decorate}] (axis cs:1,-3) -- (axis cs:-3,-3);
\draw[thick,blue,postaction={decorate}] (axis cs:-3,-3) -- (axis cs:-3,2.5);

\draw[thick,blue,postaction={decorate}] (axis cs:-2.5,0) -- (axis cs:-2.5,2);
\draw[thick,blue,postaction={decorate}] (axis cs:-2.5,2) -- (axis cs:2,2);
\draw[thick,blue,postaction={decorate}] (axis cs:2,2) -- (axis cs:2,-4);
\draw[thick,blue,postaction={decorate}] (axis cs:2,-4) -- (axis cs:-4,-4);
\draw[thick,blue,postaction={decorate}] (axis cs:-4,-4) -- (axis cs:-4,4.5);

\draw[thick,blue,postaction={decorate}] (axis cs:-1.25,0) -- (axis cs:-1,0);

\fill[black] (axis cs:-2.5,0) circle (1.625pt);
\fill[black] (axis cs:-1.25,0) circle (1.625pt);
\fill[black] (axis cs:0.5,0) circle (1.625pt);


\draw[very thick,blue,postaction={decorate}] (axis cs:-1,0) -- (axis cs:0,0);
\draw[very thick,blue,postaction={decorate}] (axis cs:0,0) -- (axis cs:0,-1);
\draw[very thick,blue,postaction={decorate}] (axis cs:0,-1) -- (axis cs:-1,-1);
\draw[very thick,blue,postaction={decorate}] (axis cs:-1,-1) -- (axis cs:-1,0);
\end{scope}
\end{axis}
\end{tikzpicture}\\[-0.1375cm]
		{\scriptsize $(\lambda,\mu)=(-1,-0.5)$ (Theorem \ref{thm:omegalimit} (viii))} & {\scriptsize $(\lambda,\mu)=(-1.3,-1.8)$ (Theorem \ref{thm:omegalimit} (ix))} & {\scriptsize $(\lambda,\mu)=(-1.3,-0.8)$ (Theorem \ref{thm:omegalimit} (viii))}\\[0.15cm]

\begin{tikzpicture} [scale=0.55] 
\begin{axis}[
	xmin=-4.5,
	xmax=2.5,
	ymin=-4.5,
	ymax=2.5,
    xtick={-4,-2,0,2},
    ytick={-4,-2,0,2},
	xmajorgrids,
	ymajorgrids
]

\draw[thick] (axis cs:-4.5,0) -- (axis cs:2.5,0);
\draw[thick] (axis cs:0,-4.5) -- (axis cs:0,2.5);

\addplot[domain=-4.5:2.5] (\x,\x);
 
\addplot[red,ultra thick,domain=-2.521739130: -2.086956522] (\x,2);
\draw[red,very thick,fill=white] (axis cs:-2.521739130,2) circle (1.75pt);
\draw[red,very thick,fill=red] (axis cs:-2.086956522,2) circle (1.75pt);

\addplot[red,ultra thick,domain=-2.086956522:-1.652173913] (\x,1);
\draw[red,very thick,fill=white] (axis cs:-2.086956522,1) circle (1.75pt);
\draw[red,very thick,fill=red] (axis cs:-1.652173913,1) circle (1.75pt);

\addplot[red,ultra thick,domain=-1.652173913:-1.217391304] (\x,0);
\draw[red,very thick,fill=white] (axis cs:-1.652173913,0) circle (1.75pt);
\draw[red,very thick,fill=red] (axis cs:-1.217391304,0) circle (1.75pt);

\addplot[red,ultra thick,domain=-1.217391304:-.7826086957] (\x,-1);
\draw[red,very thick,fill=white] (axis cs:-1.217391304,-1) circle (1.75pt);
\draw[red,very thick,fill=red] (axis cs:-.7826086957,-1) circle (1.75pt);

\addplot[red,ultra thick,domain=-.7826086957:-.3478260870] (\x,-2);
\draw[red,very thick,fill=white] (axis cs:-.7826086957,-2) circle (1.75pt);
\draw[red,very thick,fill=red] (axis cs:-.3478260870,-2) circle (1.75pt);

\addplot[red,ultra thick,domain=-.3478260870:0.8695652174e-1] (\x,-3);
\draw[red,very thick,fill=white] (axis cs:-.3478260870,-3) circle (1.75pt);
\draw[red,very thick,fill=red] (axis cs:0.8695652174e-1,-3) circle (1.75pt);

\addplot[red,ultra thick,domain=0.8695652174e-1:.5217391304] (\x,-4);
\draw[red,very thick,fill=white] (axis cs:0.8695652174e-1,-4) circle (1.75pt);
\draw[red,very thick,fill=red] (axis cs:.5217391304,-4) circle (1.75pt);

\begin{scope}[decoration={
    markings,
    mark=at position 0.5 with {\arrow{>}}}
    ]
\draw[thick,blue,postaction={decorate}] (axis cs:0,0) -- (axis cs:0,-3);
\draw[thick,blue,postaction={decorate}] (axis cs:0,-3) -- (axis cs:-3,-3);
\draw[thick,blue,postaction={decorate}] (axis cs:-3,-3) -- (axis cs:-3,2.5);

\draw[thick,blue,postaction={decorate}] (axis cs:-0.5,0) -- (axis cs:-0.5,-2);
\draw[thick,blue,postaction={decorate}] (axis cs:-0.5,-2) -- (axis cs:-2,-2);
\draw[thick,blue,postaction={decorate}] (axis cs:-2,-2) -- (axis cs:-2,1);
\draw[thick,blue,postaction={decorate}] (axis cs:-2,1) -- (axis cs:1,1);
\draw[thick,blue,postaction={decorate}] (axis cs:1,1) -- (axis cs:1,-4.5);

\draw[thick,blue,postaction={decorate}] (axis cs:-1.125,0) -- (axis cs:-1.125,-1);
\draw[thick,blue,postaction={decorate}] (axis cs:-1.125,-1) -- (axis cs:-1,-1);
\draw[thick,blue,postaction={decorate}] (axis cs:-0.875,0) -- (axis cs:-0.875,-1);
\draw[thick,blue,postaction={decorate}] (axis cs:-0.875,-1) -- (axis cs:-1,-1);

\draw[thick,blue,postaction={decorate}] (axis cs:0.3,0) -- (axis cs:0.3,-4);
\draw[thick,blue,postaction={decorate}] (axis cs:0.3,-4) -- (axis cs:-4,-4);
\draw[thick,blue,postaction={decorate}] (axis cs:-4,-4) -- (axis cs:-4,4.5);

\draw[thick,blue,postaction={decorate}] (axis cs:-2.3,0) -- (axis cs:-2.3,2);
\draw[thick,blue,postaction={decorate}] (axis cs:-2.3,2) -- (axis cs:2,2);
\draw[thick,blue,postaction={decorate}] (axis cs:2,2) -- (axis cs:2,-4.5);

\fill[black] (axis cs:0,0) circle (1.625pt);
\fill[black] (axis cs:-0.5,0) circle (1.625pt);
\fill[black] (axis cs:0.3,0) circle (1.625pt);
\fill[black] (axis cs:-2.3,0) circle (1.625pt);
\fill[black] (axis cs:-1.125,0) circle (1.625pt);
\fill[black] (axis cs:-0.875,0) circle (1.625pt);

\fill[blue] (axis cs:-1,-1) circle (1.625pt);
\end{scope}
\end{axis}
\end{tikzpicture}&\begin{tikzpicture} [scale=0.55] 
\begin{axis}[
	xmin=-4.5,
	xmax=2.5,
	ymin=-4.5,
	ymax=2.5,
    xtick={-4,-2,0,2},
    ytick={-4,-2,0,2},
	xmajorgrids,
	ymajorgrids
]

\draw[thick] (axis cs:-4.5,0) -- (axis cs:2.5,0);
\draw[thick] (axis cs:0,-4.5) -- (axis cs:0,2.5);

\addplot[domain=-4.5:2.5] (\x,\x);

\addplot[red,ultra thick,domain=-2.956521739:-2.521739130] (\x,2);
\draw[red,very thick,fill=white] (axis cs:-2.956521739,2) circle (1.75pt);
\draw[red,very thick,fill=red] (axis cs:-2.521739130,2) circle (1.75pt);

\addplot[red,ultra thick,domain=-2.521739130:-2.086956522] (\x,1);
\draw[red,very thick,fill=white] (axis cs:-2.521739130,1) circle (1.75pt);
\draw[red,very thick,fill=red] (axis cs:-2.086956522,1) circle (1.75pt);

\addplot[red,ultra thick,domain=-2.086956522:-1.652173913] (\x,0);
\draw[red,very thick,fill=white] (axis cs:-2.086956522,0) circle (1.75pt);
\draw[red,very thick,fill=red] (axis cs:-1.652173913,0) circle (1.75pt);

\addplot[red,ultra thick,domain=-1.652173913:-1.217391304] (\x,-1);
\draw[red,very thick,fill=white] (axis cs:-1.652173913,-1) circle (1.75pt);
\draw[red,very thick,fill=red] (axis cs:-1.217391304,-1) circle (1.75pt);

\addplot[red,ultra thick,domain=-1.217391304:-.7826086957] (\x,-2);
\draw[red,very thick,fill=white] (axis cs:-1.217391304,-2) circle (1.75pt);
\draw[red,very thick,fill=red] (axis cs:-.7826086957,-2) circle (1.75pt);

\addplot[red,ultra thick,domain=-.7826086957:-.3478260870] (\x,-3);
\draw[red,very thick,fill=white] (axis cs:-.7826086957,-3) circle (1.75pt);
\draw[red,very thick,fill=red] (axis cs:-.3478260870,-3) circle (1.75pt);

\addplot[red,ultra thick,domain=-.3478260870:0.8695652174e-1] (\x,-4);
\draw[red,very thick,fill=white] (axis cs:-.3478260870,-4) circle (1.75pt);
\draw[red,very thick,fill=red] (axis cs:0.8695652174e-1,-4) circle (1.75pt);

\begin{scope}[decoration={
    markings,
    mark=at position 0.5 with {\arrow{>}}}
    ]
\draw[thick,blue,postaction={decorate}] (axis cs:-1.5,0) -- (axis cs:-1.5,-1);
\draw[thick,blue,postaction={decorate}] (axis cs:-1.5,-1) -- (axis cs:-1,-1);
\draw[thick,blue,postaction={decorate}] (axis cs:-1,-1) -- (axis cs:-1,-2);
\draw[thick,blue,postaction={decorate}] (axis cs:-1,-2) -- (axis cs:-2,-2);
\draw[thick,blue,postaction={decorate}] (axis cs:-2,-2) -- (axis cs:-2,0);
\draw[thick,blue,postaction={decorate}] (axis cs:-2,0) -- (axis cs:0,0);
\draw[thick,blue,postaction={decorate}] (axis cs:0,0) -- (axis cs:0,-4);
\draw[thick,blue,postaction={decorate}] (axis cs:0,-4) -- (axis cs:-4,-4);
\draw[thick,blue,postaction={decorate}] (axis cs:-4,-4) -- (axis cs:-4,2.5);

\draw[thick,blue,postaction={decorate}] (axis cs:-2.35,0) -- (axis cs:-2.35,1);
\draw[thick,blue,postaction={decorate}] (axis cs:-2.35,1) -- (axis cs:1,1);
\draw[thick,blue,postaction={decorate}] (axis cs:1,1) -- (axis cs:1,-4.5);

\draw[thick,blue,postaction={decorate}] (axis cs:-2.8,0) -- (axis cs:-2.8,2);
\draw[thick,blue,postaction={decorate}] (axis cs:-2.8,2) -- (axis cs:2,2);
\draw[thick,blue,postaction={decorate}] (axis cs:2,2) -- (axis cs:2,-4.5);

\draw[thick,blue,postaction={decorate}] (axis cs:-0.5,0) -- (axis cs:-0.5,-3);
\draw[thick,blue,postaction={decorate}] (axis cs:-0.5,-3) -- (axis cs:-3,-3);
\draw[thick,blue,postaction={decorate}] (axis cs:-3,-3) -- (axis cs:-3,2.5);

\fill[black] (axis cs:-1.5,0) circle (1.625pt);
\fill[black] (axis cs:-2.35,0) circle (1.625pt);
\fill[black] (axis cs:-2.8,0) circle (1.625pt);
\fill[black] (axis cs:-0.5,0) circle (1.625pt);

\end{scope}
\end{axis}
\end{tikzpicture}\\[-0.1375cm]
		{\scriptsize $(\lambda,\mu)=(-2.3,-2.8)$ (Theorem \ref{thm:omegalimit} (ix))} & {\scriptsize $(\lambda,\mu)=(-2.3,-3.8)$ (Theorem \ref{thm:omegalimit} (viii))}
\end{tabular}		
		\caption{\label{fig:cobweb}Cobweb diagrams of the map \eqref{eq:maps} for various pairs of parameter values representing different cases considered in Theorem \ref{thm:omegalimit}.}
\end{figure}

\section{Proofs of Theorems \ref{thm:FP} and \ref{thm:2cycles}}\label{sec:FP}

In this section, we prove Theorems \ref{thm:FP} and \ref{thm:2cycles} concerning the fixed points and 2-cycles of $f$. Here, as also in section \ref{sec:LP}, the following basic properties of the floor and ceiling functions will be used without explicit mention.
\begin{itemize}[leftmargin=0.5cm]
\item The floor and ceiling functions are monotonically non-decreasing, i.e., for every $x,y\in\mathbb{R}$ we have that $x\leqslant y$ implies $\left\lfloor x\right\rfloor \leqslant\left\lfloor y\right\rfloor$ and $\left\lceil x\right\rceil\leqslant\left\lceil y\right\rceil$.
\item For every $x\in\mathbb{R}$ we have $x-1<\left\lfloor x\right\rfloor\leqslant x$ and $x\leqslant\left\lceil x\right\rceil<x+1$.
\item For every $x,n\in\mathbb{R}$ we have $\left\lfloor x\right\rfloor = n$ if and only if $n\in\mathbb{Z}$ and $n\leqslant x<n+1$, and $\left\lceil x\right\rceil=n$ if and only if $n\in\mathbb{Z}$ and $n-1<x\leqslant n$.
\item For every $x\in\mathbb{R}$ and $n\in\mathbb{Z}$ we have $\left\lfloor x+n\right\rfloor=\left\lfloor x\right\rfloor+n$ and $\left\lceil x+n\right\rceil=\left\lceil x\right\rceil+n$.
\end{itemize}\bigskip

\begin{proofFP}
We have
\begin{align*}
\Fix(f)&=\left\{x\in\Z:0\leqslant(\lambda-1)x+\mu<1\right\}\\
&=\begin{cases}
\mathbb{Z},&\text{if }\lambda=1\text{ and }\mu\in[0,1);\\
\varnothing,&\text{if }\lambda=1\text{ and }\mu\in(-\infty,0)\cup[1,\infty);\\
\left\{x\in\Z:-\frac{\mu}{\lambda-1}\leqslant x <-\frac{\mu-1}{\lambda-1}\right\},&\text{if }\lambda>1;\\
\left\{x\in\Z:-\frac{\mu-1}{\lambda-1}< x \leqslant-\frac{\mu}{\lambda-1}\right\},&\text{if }\lambda<1,
\end{cases}
\end{align*}
and hence the theorem.
\end{proofFP}\bigskip

\begin{proof2cycles}
Since the case $\lambda=-1$ is straightforward, let us assume $\lambda\neq -1$. We seek all $\{x,y\}\subseteq\mathbb{Z}$, where $x<y$, satisfying
\begin{equation}\label{eq:proof2cyclessystem}
\left\{\begin{array}{rcl}
\left\lfloor \lambda x+\mu\right\rfloor&=&y,\\
\left\lfloor \lambda y+\mu\right\rfloor&=&x.
\end{array}\right.
\end{equation}
Subtracting gives an equation which necessitates
\begin{equation}\label{eq:proof2cyclesineq}
-1<(\lambda+1)(y-x)<1.
\end{equation}

Suppose $\lambda>-1$. Then \eqref{eq:proof2cyclesineq} is equivalent to $0<y-x<\frac{1}{\lambda+1}$. If $\frac{1}{\lambda+1}\leqslant 1$, i.e., $\lambda\geqslant 0$, then there is no 2-cycle. Otherwise, since $\left\lceil\frac{1}{\lambda+1}\right\rceil-1<\frac{1}{\lambda+1}\leqslant \left\lceil\frac{1}{\lambda+1}\right\rceil$, then $y=x+k$, where $k\in\left\{1,\ldots,\left\lceil\frac{1}{\lambda+1}\right\rceil-1\right\}$. Substituting this into \eqref{eq:proof2cyclessystem} gives
\begin{equation}\label{eq:proof2cyclessystemineq0}
\left\{\begin{array}{rcl}
\left\lfloor\lambda x+\mu\right\rfloor&=&x+k,\\
\left\lfloor\lambda (x+k)+\mu\right\rfloor&=&x,
\end{array}\right.
\end{equation}
which is equivalent to
\begin{equation}\label{eq:proof2cyclessystemineq1}
\left\{\begin{array}{rcl}
\frac{k+1-\mu}{\lambda-1}<x\leqslant\frac{k-\mu}{\lambda-1},\\
\frac{-\lambda k-\mu+1}{\lambda-1}<x\leqslant\frac{-\lambda k-\mu}{\lambda-1}.
\end{array}\right.
\end{equation}
The conditions $0<k\leqslant \left\lceil\frac{1}{\lambda+1}\right\rceil-1<\frac{1}{\lambda+1}$ and $\lambda>-1$ imply
$$\frac{k+1-\mu}{\lambda-1}<\frac{-\lambda k-\mu+1}{\lambda-1}<\frac{k-\mu}{\lambda-1}<\frac{-\lambda k-\mu}{\lambda-1},$$
and so \eqref{eq:proof2cyclessystemineq1} is equivalent to
$$x\in\mathbb{Z}\cap\left(\frac{-\lambda k-\mu+1}{\lambda-1},\frac{k-\mu}{\lambda-1}\right]=\left\{\left\lfloor\frac{-\lambda k-\mu+1}{\lambda-1}\right\rfloor+1,\ldots,\left\lfloor\frac{k-\mu}{\lambda-1}\right\rfloor\right\}.$$
Thus, the set of all $2$-cycles of $f$ is
$$\bigcup_{k=1}^{\left\lceil\frac{1}{\lambda+1}\right\rceil-1}\left\{\left\{x,x+k\right\}:x\in\left\{\left\lfloor\frac{-\lambda k-\mu+1}{\lambda-1}\right\rfloor+1,\ldots,\left\lfloor\frac{k-\mu}{\lambda-1}\right\rfloor\right\}\right\}.$$

In the case $\lambda<-1$, \eqref{eq:proof2cyclesineq} is equivalent to $0<y-x<-\frac{1}{\lambda+1}$. As before, if $-\frac{1}{\lambda+1}\leqslant 1$, i.e., $\lambda\leqslant-2$, then there is no $2$-cycle. Otherwise, since ${-\left\lfloor\frac{1}{\lambda+1}\right\rfloor-1}<-\frac{1}{\lambda+1}\leqslant -\left\lfloor\frac{1}{\lambda+1}\right\rfloor$, then $y=x+k$, where $k\in\left\{1,\ldots,-\left\lfloor\frac{1}{\lambda+1}\right\rfloor-1\right\}$. Substituting this into \eqref{eq:proof2cyclessystem} gives \eqref{eq:proof2cyclessystemineq0}, and hence \eqref{eq:proof2cyclessystemineq1}. The conditions $0<k\leqslant -\left\lfloor\frac{1}{\lambda+1}\right\rfloor-1<-\frac{1}{\lambda+1}$ and $\lambda<-1$ imply
$$\frac{-\lambda k-\mu+1}{\lambda-1}<\frac{k+1-\mu}{\lambda-1}<\frac{-\lambda k-\mu}{\lambda-1}<\frac{k-\mu}{\lambda-1}.$$
The rest is analogous.
\end{proof2cycles}\bigskip

\section{Proof of Theorem \ref{thm:omegalimit}}\label{sec:LP}

In this section, we establish Theorem \ref{thm:omegalimit} via several propositions: Propositions \ref{prop1}--\ref{prop4} and \ref{prop5}--\ref{prop9}, considering separately the cases $\lambda\geqslant 0$ in which $f$ is monotonically non-decreasing (subsection \ref{subsec1}) and $\lambda<0$ in which $f$ is monotonically non-increasing (subsection \ref{subsec2}).

\subsection{The case $\lambda\geqslant 0$}\label{subsec1}

In the case $\lambda\geqslant 0$, $f$ is monotonically non-decreasing, and so $f$ may possess any number of fixed points. Let us deal with the two easiest subcases first.

\subsubsection{The subcases $\lambda=0$ and $\lambda=1$}

The subcase $\lambda=0$ is trivial, and is given by the following proposition.\medskip

\begin{proposition}\label{prop1}
If $\lambda=0$, then for every $x\in\mathbb{R}$ we have
$\omega_f(x)=\left\{\left\lfloor \mu\right\rfloor\right\}$.
\end{proposition}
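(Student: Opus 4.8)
The plan is to observe that when $\lambda=0$ the map degenerates to a constant. First I would substitute $\lambda=0$ into the definition \eqref{eq:maps}, obtaining $f(x)=\left\lfloor 0\cdot x+\mu\right\rfloor=\left\lfloor\mu\right\rfloor$ for every $x\in\mathbb{R}$; thus $f$ sends every real number to the single integer $\left\lfloor\mu\right\rfloor$.

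Next I would iterate. Since $f$ is constant with value $\left\lfloor\mu\right\rfloor$, we have in particular $f\!\left(\left\lfloor\mu\right\rfloor\right)=\left\lfloor\mu\right\rfloor$, and so an immediate induction gives $f^n(x)=\left\lfloor\mu\right\rfloor$ for every $x\in\mathbb{R}$ and every integer $n\geqslant 1$. Hence the orbit sequence $\left(f^n(x)\right)_{n\geqslant 1}$ is the constant sequence with value $\left\lfloor\mu\right\rfloor$.

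Finally I would read off the $\omega$-limit set directly from its definition. Any admissible sequence of indices $n_1<n_2<\cdots$ in $\mathbb{N}_0$ is strictly increasing, hence satisfies $n_k\to\infty$, so $n_k\geqslant 1$ for all sufficiently large $k$; for such $k$ we have $f^{n_k}(x)=\left\lfloor\mu\right\rfloor$, whence $f^{n_k}(x)\to\left\lfloor\mu\right\rfloor$. This shows $\left\lfloor\mu\right\rfloor$ is the only possible subsequential limit, while the choice $n_k=k$ exhibits it as an actual limit, giving $\omega_f(x)=\left\{\left\lfloor\mu\right\rfloor\right\}$.

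There is no genuine obstacle here: the entire content is the collapse of $f$ to a constant map. The only point requiring a moment's care is the bookkeeping that the indices $n_k$, although permitted to equal $0$, must eventually exceed $0$ because they form an infinite strictly increasing sequence, so the initial value $f^0(x)=x$ never contributes to the limit.
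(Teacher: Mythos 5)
Your proof is correct and follows exactly the route the paper intends: the paper states this proposition without proof, remarking only that the subcase $\lambda=0$ is trivial, since $f$ collapses to the constant map $x\mapsto\left\lfloor\mu\right\rfloor$. Your careful handling of the indices $n_k$ (noting that an infinite strictly increasing sequence in $\mathbb{N}_0$ eventually exceeds $0$, so $f^0(x)=x$ never contributes) merely spells out the bookkeeping the paper leaves implicit.
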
\medskip

\noindent We now turn to the subcase $\lambda=1$.\medskip

\begin{proposition}\label{prop2}
If $\lambda=1$, then the following holds.
\begin{itemize}[leftmargin=0.5cm]
\item If $\mu\geqslant 1$, then for every $x\in\mathbb{R}$ we have $\omega_f(x)=\{\infty\}$.
\item If $\mu<0$, then for every $x\in\mathbb{R}$ we have $\omega_f(x)=\{-\infty\}$.
\item If $\mu\in[0,1)$, then for every $x\in\mathbb{R}$ we have $\omega_f(x)=\left\{\left\lfloor x+\mu\right\rfloor\right\}$.
\end{itemize}
\end{proposition}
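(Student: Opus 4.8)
The plan is to exploit the fact that, when $\lambda=1$, a single application of $f$ sends any real number into $\mathbb{Z}$, on which $f$ acts as a pure translation. First I would note that $f(x)=\lfloor x+\mu\rfloor\in\mathbb{Z}$ for every $x\in\mathbb{R}$, so that $f(x)$ is already an integer; and that for every $m\in\mathbb{Z}$ the integer-shift property of the floor gives $f(m)=\lfloor m+\mu\rfloor=m+\lfloor\mu\rfloor$. Consequently, writing $x_n:=f^n(x)$, we obtain $x_n=f(x)+(n-1)\lfloor\mu\rfloor$ for every $n\geqslant 1$, so that the entire tail of the orbit is an arithmetic progression of integers with common difference $\lfloor\mu\rfloor$.

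The remainder is then a case analysis on the value of $\lfloor\mu\rfloor$, which is dictated by the three prescribed ranges of $\mu$: indeed $\mu\geqslant 1$, $\mu<0$, and $\mu\in[0,1)$ are respectively equivalent to $\lfloor\mu\rfloor\geqslant 1$, $\lfloor\mu\rfloor\leqslant -1$, and $\lfloor\mu\rfloor=0$. In the first case the progression $(x_n)_{n\geqslant 1}$ is strictly increasing and unbounded, hence $x_n\to\infty$; in the second it is strictly decreasing and unbounded, hence $x_n\to-\infty$; in the third it is constant from $n=1$ onward, with $x_n=f(x)=\lfloor x+\mu\rfloor$ for every $n\geqslant 1$.

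Reading off the $\omega$-limit set from each case then yields the claim: $\omega_f(x)=\{\infty\}$, $\{-\infty\}$, and $\left\{\lfloor x+\mu\rfloor\right\}$ respectively. I do not expect a genuine obstacle here—the argument is essentially the single observation that $f$ becomes an exact translation after one iterate—and the only point warranting a little care is that in the two unbounded cases one must confirm that the orbit admits no finite accumulation point, so that no $\ell\in\mathbb{R}$ lies in $\omega_f(x)$; this is immediate from the strict monotonicity and unboundedness of the integer progression, leaving $\infty$ (respectively $-\infty$) as the sole limit under the paper's extended convention.
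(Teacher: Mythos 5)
Your proof is correct, and it takes a mildly different route from the paper's. The paper argues structurally: for $\mu\geqslant 1$ or $\mu<0$ it invokes $\Fix(f)=\varnothing$ (Theorem \ref{thm:FP}(ii)) together with the displacement sign $f(x)>x$, respectively $f(x)<x$, for all $x\in\mathbb{R}$, and for $\mu\in[0,1)$ it notes that $f(x)=\left\lfloor x+\mu\right\rfloor\in\Fix(f)$, so the orbit is constant after one step. You instead compute the orbit in closed form, observing that $f$ maps $\mathbb{R}$ into $\mathbb{Z}$ and acts on $\mathbb{Z}$ as translation by $\left\lfloor\mu\right\rfloor$, giving $f^{n}(x)=f(x)+(n-1)\left\lfloor\mu\right\rfloor$ for $n\geqslant 1$, and then translate the three ranges of $\mu$ into the sign of $\left\lfloor\mu\right\rfloor$; your stated equivalences ($\mu\geqslant 1\Leftrightarrow\left\lfloor\mu\right\rfloor\geqslant 1$, etc.) are all valid. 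The two arguments rest on the same underlying facts, but yours is more self-contained and in one respect tighter: to pass from the paper's ``$f(x)>x$ for every $x$ and no fixed points'' to $\omega_f(x)=\{\infty\}$ one still needs the implicit observation that the orbit consists of integers, so that increments are at least $1$ and no finite accumulation point can occur --- precisely the point your arithmetic-progression formula makes explicit. What the paper's phrasing buys in exchange is uniformity: the displacement-sign argument is the same device reused for $\lambda>1$ and $0<\lambda<1$ in Propositions \ref{prop3} and \ref{prop4}, so its proof of Proposition \ref{prop2} fits a common scheme, whereas your exact translation formula is special to $\lambda=1$.
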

\begin{proof}
Suppose $\lambda=1$. If $\mu\geqslant1$ or $\mu<0$, then $\Fix(f)=\varnothing$, and we have $f(x)>x$ for every $x\in\mathbb{R}$ in the former case and $f(x)<x$ for every $x\in\mathbb{R}$ in the latter. If $0\leqslant\mu<1$, then for every $x\in\mathbb{R}$, $f(x)=\left\lfloor x+\mu\right\rfloor\in\Fix(f)$. The proposition follows.
\end{proof}\medskip

\subsubsection{The subcase $\lambda>1$} The subcase $\lambda>1$ is given by the following proposition.\medskip

\begin{proposition}\label{prop3}
If $\lambda>1$ and $\left\lceil-\frac{\mu}{\lambda-1}\right\rceil\leqslant\left\lceil-\frac{\mu-1}{\lambda-1}\right\rceil-1$, then for every $x\in\mathbb{R}$ we have
\begin{equation}\label{eq:prop3eq1}
\omega_f(x)=\begin{cases}
\{\infty\},&\text{if }x\geqslant\frac{1}{\lambda}\left(\left\lceil-\frac{\mu-1}{\lambda-1}\right\rceil-\mu\right);\\
\{-\infty\},&\text{if }x<\frac{1}{\lambda}\left(\left\lceil-\frac{\mu}{\lambda-1}\right\rceil-\mu\right);\\
\{f(x)\},&\text{if }\frac{1}{\lambda}\left(\left\lceil-\frac{\mu}{\lambda-1}\right\rceil-\mu\right)\leqslant x<\frac{1}{\lambda}\left(\left\lceil-\frac{\mu-1}{\lambda-1}\right\rceil-\mu\right).
\end{cases}
\end{equation}
If $\lambda>1$ and $\left\lceil-\frac{\mu}{\lambda-1}\right\rceil>\left\lceil-\frac{\mu-1}{\lambda-1}\right\rceil-1$, then for every $x\in\mathbb{R}$ we have
\begin{equation}\label{eq:prop3eq2}
\omega_f(x)=\begin{cases}
\{\infty\},&\text{if }x\geqslant\frac{1}{\lambda}\left(\left\lfloor-\frac{\mu}{\lambda-1}\right\rfloor-\mu+1\right);\\
\{-\infty\},&\text{if }x<\frac{1}{\lambda}\left(\left\lfloor-\frac{\mu}{\lambda-1}\right\rfloor-\mu+1\right).
\end{cases}
\end{equation}
\end{proposition}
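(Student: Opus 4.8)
The plan is to reduce everything to the dynamics of $f$ on the integers, since $f(x)=\lfloor\lambda x+\mu\rfloor\in\mathbb{Z}$ for every $x\in\mathbb{R}$, so that $\omega_f(x)$ is governed entirely by the orbit of the integer $f(x)$. Accordingly, I would first classify each integer $n$ according to the sign of $f(n)-n$. A direct floor computation gives, for $n\in\mathbb{Z}$, that $f(n)\geqslant n+1$ is equivalent to $n\geqslant-\frac{\mu-1}{\lambda-1}$, that $f(n)\leqslant n-1$ is equivalent to $n<-\frac{\mu}{\lambda-1}$, and, by Theorem \ref{thm:FP}(i), that $f(n)=n$ is equivalent to $-\frac{\mu}{\lambda-1}\leqslant n<-\frac{\mu-1}{\lambda-1}$. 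Thus, writing $a=-\frac{\mu}{\lambda-1}$ and $b=-\frac{\mu-1}{\lambda-1}$ (note $b-a=\frac{1}{\lambda-1}>0$), the integers split into a lower decreasing region $n<a$, a middle band $[a,b)$ of fixed points, and an upper increasing region $n\geqslant b$.

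The core dynamical step is then to show that the two outer regions are forward-invariant and force divergence. For an integer $n\geqslant\lceil b\rceil$ one has $f(n)\geqslant n+1\geqslant\lceil b\rceil$, so the half-line $\{n\in\mathbb{Z}:n\geqslant\lceil b\rceil\}$ is invariant under $f$ and the orbit increases by at least $1$ at each step; hence $f^k(n)\to\infty$ and $\omega_f(n)=\{\infty\}$. The symmetric argument on $\{n\in\mathbb{Z}:n\leqslant\lceil a\rceil-1\}$, using $f(n)\leqslant n-1$, yields $\omega_f(n)=\{-\infty\}$, while on the band $[a,b)$ every point is fixed, so $\omega_f(n)=\{n\}$.

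It remains to translate these integer thresholds back to the real variable $x$ via the single step $x\mapsto f(x)$, using $\lambda>0$ so that $\lfloor\lambda x+\mu\rfloor\geqslant N$ is equivalent to $x\geqslant\frac{N-\mu}{\lambda}$ for each integer $N$. In the first case of the proposition, fixed points exist and are exactly the integers from $\lceil a\rceil$ to $\lceil b\rceil-1$; the conditions $f(x)\geqslant\lceil b\rceil$, $f(x)\leqslant\lceil a\rceil-1$, and $\lceil a\rceil\leqslant f(x)\leqslant\lceil b\rceil-1$ then convert directly into the three stated thresholds for $x$, giving $\{\infty\}$, $\{-\infty\}$, and $\{f(x)\}$ respectively.

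In the second case no fixed point exists, so by Theorem \ref{thm:FP}(i) we have $\lceil a\rceil>\lceil b\rceil-1$; together with $a<b$ this forces $\lceil a\rceil=\lceil b\rceil$, so the band $[a,b)$ contains no integer and the increasing and decreasing regions partition $\mathbb{Z}$ at $\lceil a\rceil$. Moreover $a\notin\mathbb{Z}$ (otherwise $a\in[a,b)$ would be a fixed point), whence $\lceil a\rceil=\lfloor a\rfloor+1=\lfloor-\frac{\mu}{\lambda-1}\rfloor+1$; this collapses the two thresholds into the single value $\frac{1}{\lambda}\left(\lfloor-\frac{\mu}{\lambda-1}\rfloor-\mu+1\right)$ appearing in \eqref{eq:prop3eq2}. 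I expect the main obstacle to be purely bookkeeping: matching the integer thresholds $\lceil a\rceil$ and $\lceil b\rceil$ to the ceiling and floor expressions in the statement, and keeping track of strict versus non-strict inequalities across the floor in the final conversion. The dynamics itself is immediate once forward-invariance of the outer regions is established.
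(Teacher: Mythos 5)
Your proposal is correct, and it reaches the result by a genuinely different mechanism than the paper. Writing, as you do, $a=-\frac{\mu}{\lambda-1}$ and $b=-\frac{\mu-1}{\lambda-1}$: the paper works with the real variable throughout, partitioning $\mathbb{R}$ at $a$ and $b$, observing $f(x)>x$ on $[b,\infty)$ and $f(x)<x$ on $(-\infty,a)$, and on the middle band $[a,b)$ proving $\lfloor x\rfloor\leqslant\lambda x+\mu<\lfloor x\rfloor+2$, so that $f$ rounds $x$ either up to $\lfloor x\rfloor+1$ or down to $\lfloor x\rfloor$ according to the per-integer threshold $\frac{1}{\lambda}\left(n-\mu+1\right)$, and then reads off whether $f(x)$ lands in $\Fix(f)$ or in an escaping region. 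You instead collapse everything to the integer lattice after one step, using $f(\mathbb{R})\subseteq\mathbb{Z}$ and $\omega_f(x)=\omega_f(f(x))$: you classify $n\in\mathbb{Z}$ by the sign of $f(n)-n$, note that the two outer half-lines $\{n\geqslant\lceil b\rceil\}$ and $\{n\leqslant\lceil a\rceil-1\}$ are forward-invariant with jumps of size at least $1$, and pull the exhaustive integer conditions $f(x)\geqslant\lceil b\rceil$, $f(x)\leqslant\lceil a\rceil-1$, $\lceil a\rceil\leqslant f(x)\leqslant\lceil b\rceil-1$ back to thresholds in $x$ via $f(x)\geqslant N\iff x\geqslant\frac{N-\mu}{\lambda}$. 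This buys two things: the divergence claim becomes airtight (the paper's ``Consequently, \ldots\ $\omega_f(x)=\{\infty\}$'' leaves the invariance-plus-integer-increment step implicit), and the two-valued rounding analysis of $f$ on the band is avoided entirely. Your handling of the fixed-point-free case is likewise a clean alternative: you derive $\lceil a\rceil=\lceil b\rceil$ and $a\notin\mathbb{Z}$ (else $a$ itself would be a fixed point, since $a\in[a,b)$), whence $\lceil a\rceil=\lfloor a\rfloor+1$ collapses the two thresholds into the single value in \eqref{eq:prop3eq2}; the paper instead argues that every $x$ in the band satisfies $\lfloor x\rfloor=\left\lfloor-\frac{\mu}{\lambda-1}\right\rfloor$ and applies its per-integer threshold once. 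What the paper's route buys in exchange is the local rounding dichotomy on the band, which is exactly the computation recycled verbatim for Proposition \ref{prop4} (the case $0<\lambda<1$) and which corresponds directly to the cobweb pictures in Figure \ref{fig:cobweb}.
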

\begin{proof}
Suppose $\lambda>1$. Let $x\in\mathbb{R}$. If $x\geqslant-\frac{\mu-1}{\lambda-1}$ or $x<-\frac{\mu}{\lambda-1}$, then $x$ is not a fixed point, and we have $f(x)>x$ in the former case and $f(x)<x$ in the latter. Consequently, if $x\geqslant-\frac{\mu-1}{\lambda-1}$ then $\omega_f(x)=\{\infty\}$, and if $x<-\frac{\mu}{\lambda-1}$ then $\omega_f(x)=\{-\infty\}$.

Now suppose $-\frac{\mu}{\lambda-1}\leqslant x<-\frac{\mu-1}{\lambda-1}$. Then $\left\lfloor x\right\rfloor \leqslant  \lambda x+\mu < \left\lfloor x\right\rfloor+2$, and so
$$f(x)=\begin{cases}
\left\lfloor x\right\rfloor+1,&\text{if }\lambda x+\mu\geqslant\left\lfloor x\right\rfloor+1;\\
\left\lfloor x\right\rfloor,&\text{if }\lambda x+\mu<\left\lfloor x\right\rfloor+1.
\end{cases}$$
Therefore, for every $n\in\mathbb{Z}$, if $n\leqslant x<n+1$, then
$$f(x)=\begin{cases}
\left\lfloor x\right\rfloor+1>x,&\text{if }x\geqslant\frac{1}{\lambda}\left(n-\mu+1\right);\\
\left\lfloor x\right\rfloor\leqslant x,&\text{if }x<\frac{1}{\lambda}\left(n-\mu+1\right).
\end{cases}$$
In the case $\left\lceil-\frac{\mu}{\lambda-1}\right\rceil\leqslant\left\lceil-\frac{\mu-1}{\lambda-1}\right\rceil-1$, if $\frac{1}{\lambda}\left(\left\lceil-\frac{\mu}{\lambda-1}\right\rceil-\mu\right)\leqslant x<\frac{1}{\lambda}\left(\left\lceil-\frac{\mu-1}{\lambda-1}\right\rceil-\mu\right)$ then $f(x)\in\Fix(f)=\left\{\left\lceil -\frac{\mu}{\lambda-1}\right\rceil,\left\lceil-\frac{\mu}{\lambda-1}\right\rceil+1,\ldots,\left\lceil-\frac{\mu-1}{\lambda-1}\right\rceil-1\right\}$, if $x\geqslant\frac{1}{\lambda}\left(\left\lceil-\frac{\mu-1}{\lambda-1}\right\rceil-\mu\right)$ then $f(x)>x$, and if $x<\frac{1}{\lambda}\left(\left\lceil-\frac{\mu}{\lambda-1}\right\rceil-\mu\right)$ then $f(x)<x$. The formula \eqref{eq:prop3eq1} follows. In the opposite case, $\Fix(f)=\varnothing$, and since $\left\lfloor-\frac{\mu}{\lambda-1}\right\rfloor \leqslant x<\left\lfloor-\frac{\mu}{\lambda-1}\right\rfloor+1$, if $x\geqslant \frac{1}{\lambda}\left(\left\lfloor-\frac{\mu}{\lambda-1}\right\rfloor-\mu+1\right)$ then $f(x)>x$, and if $x<\frac{1}{\lambda}\left(\left\lfloor-\frac{\mu}{\lambda-1}\right\rfloor-\mu+1\right)$ then $f(x)<x$. The formula \eqref{eq:prop3eq2} follows.
\end{proof}\medskip

\subsubsection{The subcase $0<\lambda<1$} The subcase $0<\lambda<1$ is given by the following proposition.\medskip

\begin{proposition}\label{prop4}
If $0<\lambda<1$, then for every $x\in\mathbb{R}$ we have
$$\omega_f(x)=\begin{cases}
\left\{\left\lfloor-\frac{\mu-1}{\lambda-1}\right\rfloor+1\right\},&\text{if }x\leqslant-\frac{\mu-1}{\lambda-1};\\
\left\{\left\lfloor-\frac{\mu}{\lambda-1}\right\rfloor\right\},&\text{if }x>-\frac{\mu}{\lambda-1};\\
\{f(x)\},&\text{if }{-\frac{\mu-1}{\lambda-1}<x\leqslant-\frac{\mu}{\lambda-1}}.
\end{cases}$$
\end{proposition}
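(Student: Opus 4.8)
The plan is to exploit the fact that for $0<\lambda<1$ the map $f$ is monotonically non-decreasing, while the underlying affine map $F(x)=\lambda x+\mu$ is a contraction with unique fixed point $-\frac{\mu}{\lambda-1}$. Consequently every orbit becomes, after the first step (which lands in $\mathbb{Z}$), a monotone and bounded integer sequence, hence eventually constant and equal to a fixed point; the only real work is to identify \emph{which} fixed point. I would first record that $\Fix(f)\neq\varnothing$ automatically in this regime: by Theorem \ref{thm:FP}(iii) the fixed points are the integers in $\left(-\frac{\mu-1}{\lambda-1},-\frac{\mu}{\lambda-1}\right]$, an interval of length $\frac{1}{1-\lambda}>1$, which therefore contains an integer. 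I write $a=\left\lfloor-\frac{\mu-1}{\lambda-1}\right\rfloor+1$ and $b=\left\lfloor-\frac{\mu}{\lambda-1}\right\rfloor$ for its smallest and largest elements, so that $\Fix(f)=\{a,\dots,b\}$.

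The key elementary computation is the sign of $f(n)-n$ for integers $n$: comparing $\left\lfloor\lambda n+\mu\right\rfloor$ with $n$ yields $f(n)>n$ iff $n\leqslant-\frac{\mu-1}{\lambda-1}$, $f(n)=n$ iff $-\frac{\mu-1}{\lambda-1}<n\leqslant-\frac{\mu}{\lambda-1}$, and $f(n)<n$ iff $n>-\frac{\mu}{\lambda-1}$, consistently with $\Fix(f)=\{a,\dots,b\}$. Together with the monotonicity of $f$, this gives the dichotomy that an orbit is non-decreasing once $f(x)\geqslant x$ and non-increasing once $f(x)\leqslant x$. Two pivot identities then drive the analysis: $F\left(-\frac{\mu-1}{\lambda-1}\right)=-\frac{\mu-1}{\lambda-1}+1$ and $F\left(-\frac{\mu}{\lambda-1}\right)=-\frac{\mu}{\lambda-1}$.

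For the first case $x\leqslant-\frac{\mu-1}{\lambda-1}$, monotonicity of $F$ and of the floor give $f(x)\leqslant\left\lfloor-\frac{\mu-1}{\lambda-1}+1\right\rfloor=a$; and since every integer $m<a$ satisfies $m\leqslant-\frac{\mu-1}{\lambda-1}$, hence $m<f(m)\leqslant a$, the orbit strictly increases while staying $\leqslant a$ and so reaches $a$, giving $\omega_f(x)=\{a\}$. The case $x>-\frac{\mu}{\lambda-1}$ is symmetric: $f(x)\geqslant b$ and the orbit decreases to $b$, so $\omega_f(x)=\{b\}$. For the middle case $-\frac{\mu-1}{\lambda-1}<x\leqslant-\frac{\mu}{\lambda-1}$, the two identities sandwich $F(x)$ into $\left(-\frac{\mu-1}{\lambda-1}+1,-\frac{\mu}{\lambda-1}\right]$, whence $a\leqslant f(x)\leqslant b$; as $\{a,\dots,b\}=\Fix(f)$, the point $f(x)$ is already fixed and $\omega_f(x)=\{f(x)\}$.

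The main obstacle is not any single inequality but making rigorous the \emph{no overshoot} step: one must check that a single application of $f$ to a sub-threshold point lands at an integer no larger than $a$ (resp.\ no smaller than $b$), so that the monotone orbit converges to the extreme fixed point rather than to an interior one. This is precisely where the pivot identity $F\left(-\frac{\mu-1}{\lambda-1}\right)=-\frac{\mu-1}{\lambda-1}+1$ (resp.\ $F\left(-\frac{\mu}{\lambda-1}\right)=-\frac{\mu}{\lambda-1}$) enters, combined with monotonicity of the floor; once it is in place, the convergence of the resulting bounded monotone integer sequence to a fixed point is immediate, since such a sequence is eventually constant.
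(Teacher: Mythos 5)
Your proposal is correct and follows essentially the same route as the paper: identify $\Fix(f)=\left\{\left\lfloor-\frac{\mu-1}{\lambda-1}\right\rfloor+1,\ldots,\left\lfloor-\frac{\mu}{\lambda-1}\right\rfloor\right\}$ via Theorem \ref{thm:FP}(iii), show that points outside the middle interval generate monotone integer orbits that cannot overshoot the extreme fixed point (the paper's statement $f(x)\in\left(x,\left\lfloor-\frac{\mu-1}{\lambda-1}\right\rfloor+1\right]\cap\mathbb{Z}$ is exactly your no-overshoot step, with the eventual-constancy of the bounded monotone integer sequence left implicit), and show that the middle interval maps into $\Fix(f)$ in one step. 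Your pivot identities $F\left(-\frac{\mu-1}{\lambda-1}\right)=-\frac{\mu-1}{\lambda-1}+1$ and $F\left(-\frac{\mu}{\lambda-1}\right)=-\frac{\mu}{\lambda-1}$ are merely a cleaner packaging of the paper's unit-interval threshold computation $f(x)=\left\lfloor x\right\rfloor+1$ iff $x\geqslant\frac{1}{\lambda}(n-\mu+1)$ on $[n,n+1)$, so the difference is one of bookkeeping, not of substance.
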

\begin{proof}
Suppose $0<\lambda<1$. Then $\left\lfloor-\frac{\mu-1}{\lambda-1}\right\rfloor+1\leqslant\left\lfloor-\frac{\mu}{\lambda-1}\right\rfloor$, which implies that $$\Fix(f)=\left\{\left\lfloor-\frac{\mu-1}{\lambda-1}\right\rfloor+1,\left\lfloor-\frac{\mu-1}{\lambda-1}\right\rfloor+2,\ldots,\left\lfloor-\frac{\mu}{\lambda-1}\right\rfloor\right\}\neq\varnothing.$$
Let $x\in\mathbb{R}$. If $x\leqslant-\frac{\mu-1}{\lambda-1}$ then $f(x)\in \left(x, \left\lfloor-\frac{\mu-1}{\lambda-1}\right\rfloor+1\right]\cap\mathbb{Z}$, while if $x>-\frac{\mu}{\lambda-1}$ then $f(x)\in \left[\left\lfloor-\frac{\mu}{\lambda-1}\right\rfloor,x\right)\cap\mathbb{Z}$. It follows that if $x\leqslant-\frac{\mu-1}{\lambda-1}$ then $\omega_f(x)=\left\{\left\lfloor-\frac{\mu-1}{\lambda-1}\right\rfloor+1\right\}$, while if $x>-\frac{\mu}{\lambda-1}$ then $\omega_f(x)=\left\{\left\lfloor-\frac{\mu}{\lambda-1}\right\rfloor\right\}$.

Now suppose $-\frac{\mu-1}{\lambda-1}<x\leqslant-\frac{\mu}{\lambda-1}$. Then $\left\lfloor x\right\rfloor \leqslant  \lambda x+\mu < \left\lfloor x\right\rfloor+2$, and so
$$f(x)=\begin{cases}
\left\lfloor x\right\rfloor+1,&\text{if }\lambda x+\mu\geqslant\left\lfloor x\right\rfloor+1;\\
\left\lfloor x\right\rfloor,&\text{if }\lambda x+\mu<\left\lfloor x\right\rfloor+1.
\end{cases}$$
Therefore, for every $n\in\mathbb{Z}$, if $n\leqslant x<n+1$, then
\begin{equation}\label{eq:prop4proof}
f(x)=\begin{cases}
\left\lfloor x\right\rfloor+1>x,&\text{if }x\geqslant\frac{1}{\lambda}\left(n-\mu+1\right);\\
\left\lfloor x\right\rfloor\leqslant x,&\text{if }x<\frac{1}{\lambda}\left(n-\mu+1\right).
\end{cases}
\end{equation}
Since $\frac{1}{\lambda}\left(\left\lfloor-\frac{\mu-1}{\lambda-1}\right\rfloor-\mu+1\right)\leqslant x<\frac{1}{\lambda}\left(\left\lfloor-\frac{\mu}{\lambda-1}\right\rfloor-\mu+1\right)$, then $f(x)\in\Fix(f)$. The proposition follows.
\end{proof}\medskip

\subsection{The case $\lambda<0$}\label{subsec2}

Now suppose that $\lambda<0$. For every $n\in\mathbb{Z}$, we define the subinterval
$$I_n:=\left(\frac{1}{\lambda}\left(\left\lfloor-\frac{\mu}{\lambda-1}\right\rfloor-\mu+n+1\right),\frac{1}{\lambda}\left(\left\lfloor-\frac{\mu}{\lambda-1}\right\rfloor-\mu+n\right)\right].$$
Notice that these subintervals partition $\mathbb{R}$. Moreover, we have the following lemma which is straightforward.\medskip

\begin{lemma}\label{lemma1}
Let $\lambda<0$. For every $n\in\mathbb{Z}$ we have $f(x)=\left\lfloor-\frac{\mu}{\lambda-1}\right\rfloor+n$ if and only if $x\in I_n$.
\end{lemma}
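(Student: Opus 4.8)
The plan is to reduce the statement to the elementary characterisation $\left\lfloor y\right\rfloor=k\Leftrightarrow k\leqslant y<k+1$ (for $k\in\mathbb{Z}$) recalled at the start of section \ref{sec:FP}, and then to solve the resulting double inequality for $x$. Abbreviating $m:=\left\lfloor-\frac{\mu}{\lambda-1}\right\rfloor$, I first observe that, since $m+n\in\mathbb{Z}$, the equality $f(x)=\left\lfloor\lambda x+\mu\right\rfloor=m+n$ holds if and only if
$$m+n\leqslant \lambda x+\mu<m+n+1.$$

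The remaining step is purely algebraic. Subtracting $\mu$ throughout gives $m+n-\mu\leqslant\lambda x<m+n+1-\mu$, and dividing by $\lambda$ then converts this into the defining condition of $I_n$. The single point requiring attention is that $\lambda<0$, so division by $\lambda$ reverses both inequalities, turning $\left[m+n,m+n+1\right)$ (for the quantity $\lambda x+\mu$) into
$$\frac{1}{\lambda}\left(m-\mu+n+1\right)<x\leqslant\frac{1}{\lambda}\left(m-\mu+n\right),$$
i.e.\ $x\in I_n$. Since every step is an equivalence, reading the chain in both directions yields the ``if and only if'' simultaneously.

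There is accordingly no genuine obstacle: the only care needed is the orientation reversal caused by the negative factor $\lambda$, which is precisely what produces an $I_n$ that is open on the left and closed on the right. It is worth noting in passing that the same sign consideration shows $\frac{1}{\lambda}\left(m-\mu+n+1\right)<\frac{1}{\lambda}\left(m-\mu+n\right)$, so each $I_n$ is a non-empty interval, and that the right endpoint of $I_{n+1}$ coincides with the left endpoint of $I_n$, confirming that the $I_n$ partition $\mathbb{R}$ as asserted just before the lemma.
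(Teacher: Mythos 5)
Your proof is correct and is exactly the routine argument the paper has in mind: the lemma is stated there without proof (``straightforward''), and unwinding $\left\lfloor\lambda x+\mu\right\rfloor=m+n$ via the characterisation $\left\lfloor y\right\rfloor=k\Leftrightarrow k\leqslant y<k+1$ and dividing by the negative $\lambda$ is precisely the intended verification. Your added remarks on non-emptiness of each $I_n$ and the matching endpoints also correctly justify the partition claim made just before the lemma.
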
\medskip

\noindent Since $\lambda<0$, then $f$ is monotonically non-increasing, and so $\left|\Fix(f)\right|\in\{0,1\}$.\medskip

\subsubsection{The subcase $\left\lfloor-\frac{\mu-1}{\lambda-1}\right\rfloor+1\leqslant\left\lfloor-\frac{\mu}{\lambda-1}\right\rfloor$}

In this subcase, we have $\left|\Fix(f)\right|=1$, and $\left\lfloor-\frac{\mu-1}{\lambda-1}\right\rfloor+1=\left\lfloor-\frac{\mu}{\lambda-1}\right\rfloor$ is the unique fixed point of $f$. By Lemma \ref{lemma1}, this implies the following proposition.\medskip

\begin{proposition}\label{prop5}
Suppose $\left\lfloor-\frac{\mu-1}{\lambda-1}\right\rfloor+1\leqslant\left\lfloor-\frac{\mu}{\lambda-1}\right\rfloor$. If $x\in I_0$, then $\omega_f(x)=\left\{\left\lfloor-\frac{\mu}{\lambda-1}\right\rfloor\right\}$.
\end{proposition}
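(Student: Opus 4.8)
The plan is to exploit the fact that $I_0$ is precisely the set of points sent directly onto the unique fixed point of $f$ in a single step. Write $p:=\left\lfloor-\frac{\mu}{\lambda-1}\right\rfloor$. First I would record that $p$ is the unique fixed point: under the standing hypothesis $\left\lfloor-\frac{\mu-1}{\lambda-1}\right\rfloor+1\leqslant\left\lfloor-\frac{\mu}{\lambda-1}\right\rfloor$, Theorem \ref{thm:FP}(iii) gives $\Fix(f)\neq\varnothing$, while the monotonic non-increasing behaviour of $f$ (valid since $\lambda<0$) forces $\left|\Fix(f)\right|\leqslant 1$; hence $\left\lfloor-\frac{\mu-1}{\lambda-1}\right\rfloor+1=p$ is the single fixed point, and in particular $f(p)=p$. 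This is exactly the identification already announced in the opening lines of the subsection.

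Next I would apply Lemma \ref{lemma1} with $n=0$: for any $x\in I_0$ we have $f(x)=p$. Since $p$ is fixed, a trivial induction then yields $f^n(x)=p$ for every $n\geqslant 1$, so the orbit of $x$ is eventually constant equal to $p$. Consequently its unique limit point is $p$, that is, $\omega_f(x)=\{p\}=\left\{\left\lfloor-\frac{\mu}{\lambda-1}\right\rfloor\right\}$, as claimed.

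There is essentially no obstacle here: the proposition is an immediate corollary of Lemma \ref{lemma1} once $p$ has been identified as the fixed point. The only point requiring any care is the verification $f(p)=p$, which is precisely what membership of $p$ in $\Fix(f)$ provides; this is what closes the orbit at $p$ rather than letting it wander under the non-increasing map. I expect the genuine work of this subsection to lie not in Proposition \ref{prop5} but in the companion propositions treating the starting points $x\notin I_0$, where the orbit may instead settle into a $2$-cycle.
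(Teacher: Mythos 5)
Your proof is correct and follows essentially the same route as the paper, which likewise derives the proposition immediately from Lemma \ref{lemma1} with $n=0$ after identifying $\left\lfloor-\frac{\mu}{\lambda-1}\right\rfloor$ as the unique fixed point (nonemptiness of $\Fix(f)$ via Theorem \ref{thm:FP}(iii) plus $\left|\Fix(f)\right|\leqslant 1$ from monotone non-increase when $\lambda<0$). Your only addition is to spell out the trivial induction $f^n(x)=p$ for $n\geqslant 1$, which the paper leaves implicit.
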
\medskip

Now suppose that $x\in I_n$, where $n\in\mathbb{Z}\smallsetminus\{0\}$. Then Lemma \ref{lemma1} and the fact that $\left\lfloor-\frac{\mu}{\lambda-1}\right\rfloor$ is a fixed point of $f$ imply
$$\frac{1}{\lambda}\left(\left\lfloor-\frac{\mu}{\lambda-1}\right\rfloor-\mu+1\right)+n<f(x)\leqslant \frac{1}{\lambda}\left(\left\lfloor-\frac{\mu}{\lambda-1}\right\rfloor-\mu\right)+n.$$\smallskip

\paragraph{\textit{The subsubcase $\lambda\leqslant-1$}} In this subsubcase, if $n\leqslant-1$ then $$f(x)\leqslant\frac{1}{\lambda}\left(\left\lfloor-\frac{\mu}{\lambda-1}\right\rfloor-\mu+(-n)\right),\quad\text{i.e.},\quad f(x)\in I_{-n}\cup I_{-n+1}\cup I_{-n+2}\cup\cdots,$$
whereas if $n\geqslant1$ then $$f(x)>\frac{1}{\lambda}\left(\left\lfloor-\frac{\mu}{\lambda-1}\right\rfloor-\mu+(-n)+1\right),\quad\text{i.e.},\quad f(x)\in I_{-n}\cup I_{-n-1}\cup I_{-n-2}\cup\cdots.$$
It follows that for every $n\in\mathbb{N}$, the sets
$$I_{-n}\cup I_{-n-1}\cup I_{-n-2}\cup\cdots\qquad\text{and}\qquad I_{n}\cup I_{n+1}\cup I_{n+2}\cup\cdots$$
are invariant under $f^2$. This leads us to the following proposition.\medskip

\begin{proposition}\label{prop6}
Suppose $\left\lfloor-\frac{\mu-1}{\lambda-1}\right\rfloor+1\leqslant\left\lfloor-\frac{\mu}{\lambda-1}\right\rfloor$ and $\lambda\leqslant -1$. If $x\in I_n$, where $n\in\mathbb{Z}\smallsetminus\{0\}$, then $\omega_f(x)$ is either a $2$-cycle or $\{-\infty,\infty\}$.
\end{proposition}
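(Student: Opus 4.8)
The plan is to exploit the fact that, since $\lambda<0$ makes $f$ monotonically non-increasing, the second iterate $g:=f^2$ is monotonically non-decreasing. For such a $g$, every orbit $\left(g^k(y)\right)_{k\geqslant 0}$ is a monotone sequence: comparing $g(y)$ with $y$ and applying the non-decreasing map $g$ repeatedly shows the orbit is non-decreasing if $g(y)\geqslant y$ and non-increasing if $g(y)\leqslant y$, so $\lim_{k\to\infty}g^k(y)$ exists in $[-\infty,+\infty]$ for every $y$. Applying this to $y=x$ and to $y=f(x)$, the even-indexed subsequence $\left(f^{2k}(x)\right)_k$ and the odd-indexed subsequence $\left(f^{2k+1}(x)\right)_k$ each converge in $[-\infty,+\infty]$, to limits $\ell_{\mathrm{e}}$ and $\ell_{\mathrm{o}}$ say. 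Since these two subsequences together exhaust the orbit, $\omega_f(x)=\{\ell_{\mathrm{e}},\ell_{\mathrm{o}}\}$, and it remains only to determine the admissible pairs $(\ell_{\mathrm{e}},\ell_{\mathrm{o}})$.

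Suppose first that $\ell_{\mathrm{e}}$ is finite. Because $f$ is integer-valued, the sequence $\left(f^{2k}(x)\right)_{k\geqslant 1}$ consists of integers and is monotone, hence eventually constant, say equal to $a$; then $\left(f^{2k+1}(x)\right)$ is eventually equal to $b:=f(a)$, with $f(b)=a$, so $\ell_{\mathrm{o}}=b$ is finite as well and $\{a,b\}$ is a $2$-periodic set. To conclude that $\{a,b\}$ is a genuine $2$-cycle rather than a fixed point, I would invoke the invariance established just before the proposition: with $x\in I_n$ for some $n\neq 0$, the orbit $\left(f^m(x)\right)_{m\geqslant 1}$ alternates between $I_1\cup I_2\cup\cdots$ and $I_{-1}\cup I_{-2}\cup\cdots$, and in particular never enters $I_0$. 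Since $\left\lfloor-\frac{\mu}{\lambda-1}\right\rfloor$ is the unique fixed point and, by Lemma \ref{lemma1}, lies in $I_0$, the value $a=f^{2k}(x)$ is never this fixed point; thus $a\neq b$ (equality would make $a$ a fixed point) and $\omega_f(x)=\{a,b\}$ is a $2$-cycle.

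Suppose instead that $\ell_{\mathrm{e}}$ is infinite, say $\ell_{\mathrm{e}}=+\infty$. As $y\to+\infty$ we have $f(y)=\left\lfloor\lambda y+\mu\right\rfloor\to-\infty$ because $\lambda<0$, so $f^{2k+1}(x)=f\!\left(f^{2k}(x)\right)\to-\infty$ and $\ell_{\mathrm{o}}=-\infty$; the case $\ell_{\mathrm{e}}=-\infty$ is symmetric, using $f(y)\to+\infty$ as $y\to-\infty$. Either way $\omega_f(x)=\{-\infty,+\infty\}$. These two cases are exhaustive and mutually exclusive: if $\ell_{\mathrm{e}}$ were finite while $\ell_{\mathrm{o}}$ were infinite, the eventual constancy of the even subsequence at $a$ would force the odd subsequence to be eventually equal to $f(a)$, hence finite, a contradiction, and symmetrically. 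The hard part will be the second paragraph, namely ruling out convergence to the fixed point; this is exactly where the hypothesis $\lambda\leqslant -1$ enters, through the $f^2$-invariance of the tails $I_{\pm n}\cup I_{\pm(n+1)}\cup\cdots$ derived above, and indeed for $-1<\lambda<0$ this exclusion fails and the fixed point reappears as an admissible limit.
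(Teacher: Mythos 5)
Your proof is correct, and it reaches the conclusion by a genuinely different mechanism than the paper's. The paper works combinatorially with the interval indices: using the $f^2$-invariance of the tails $I_n\cup I_{n+1}\cup\cdots$ (and symmetrically for $n\leqslant -1$) it tracks which $I_m$ contains $f^{2j}(x)$, and the dichotomy is between the index repeating for one step --- whence $f^{2j+1}(x)=f^{2j-1}(x)$, since $f$ is constant on each $I_m$ by Lemma \ref{lemma1}, giving a $2$-cycle --- and the index increasing strictly along an infinite sequence $k_1<k_2<\cdots$, which drives $f^{2j}(x)$ to one infinity and $f^{2j+1}(x)$ to the other. You instead obtain the dichotomy abstractly: $f$ is non-increasing for $\lambda<0$, so $g=f^2$ is non-decreasing, the even and odd subsequences are monotone and converge in $[-\infty,+\infty]$, and a monotone integer sequence with finite limit is eventually constant; the interval machinery enters only once, to keep the orbit out of $I_0$ (where, by Lemma \ref{lemma1}, the unique fixed point lives) and thereby rule out $a=b$. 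Your route buys three things: it isolates precisely where the hypothesis $\lambda\leqslant-1$ is used (the $I_0$-exclusion, which indeed fails for $-1<\lambda<0$, consistently with Proposition \ref{prop7}); it makes explicit a step the paper leaves tacit, namely that the eventually periodic pair is a genuine $2$-cycle rather than a fixed point (in the paper's proof this requires observing that $f(x)$ and $f^2(x)$ lie in intervals of opposite-sign index); and it identifies $\omega_f(x)$ exactly as the pair of parity limits, with the two-sided divergence $\ell_{\mathrm{o}}=-\ell_{\mathrm{e}}$ in the infinite case argued directly from $f(y)=\left\lfloor\lambda y+\mu\right\rfloor$. The paper's route, for its part, stays entirely inside the interval bookkeeping it has already set up, produces an explicit witness (the strictly increasing index sequence) for divergence, and transfers with minimal change to the remaining cases, notably the bounded-index argument of Proposition \ref{prop7}.
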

\begin{proof}
Suppose $\left\lfloor-\frac{\mu-1}{\lambda-1}\right\rfloor+1\leqslant\left\lfloor-\frac{\mu}{\lambda-1}\right\rfloor$ and $\lambda\leqslant -1$. Let $x\in I_n$, where $n\in\mathbb{Z}\smallsetminus\{0\}$. Suppose $n\geqslant 1$. Then $f^2(x)\in I_n\cup I_{n+1}\cup I_{n+2}\cup\cdots$. If $f^2(x)\in I_n$, then $f^3(x)=f(x)$, and so $\omega_f(x)$ is a 2-cycle. Otherwise, there exists a unique $k_1\in\mathbb{N}$ such that $f^2(x)\in I_{n+k_1}$. Then $f^4(x)\in I_{n+k_1}\cup I_{n+k_1+1}\cup I_{n+k_1+2}\cup\cdots$. If $f^4(x)\in I_{n+k_1}$, then $f^5(x)=f^3(x)$, and so $\omega_f(x)$ is a 2-cycle. Otherwise, there exists a unique $k_2\in\mathbb{N}$ with $k_1<k_2$ such that $f^4(x)\in I_{n+k_2}$. Then $f^6(x)\in I_{n+k_2}\cup I_{n+k_2+1}\cup I_{n+k_2+2}\cup\cdots$. Continuing this, either at some point one concludes that $\omega_f(x)$ is a 2-cycle or there exists an infinite sequence $k_1<k_2<k_3<\cdots$ of positive integers such that $f^2(x)\in I_{n+k_1}$, $f^4(x)\in I_{n+k_2}$, $f^6(x)\in I_{n+k_3}$, \ldots, which implies that $\omega_f(x)=\{-\infty,\infty\}$. The case $n\leqslant-1$ is analogous.
\end{proof}\medskip

\paragraph{\textit{The subsubcase $-1<\lambda<0$}} In this subsubcase, if $n\leqslant-1$ then $$f(x)>\frac{1}{\lambda}\left(\left\lfloor-\frac{\mu}{\lambda-1}\right\rfloor-\mu+(-n)+1\right),\,\,\,\text{i.e.},\,\,\,f(x)\in I_{-n}\cup I_{-n-1}\cup I_{-n-2}\cup\cdots\cup I_1\cup I_0,$$
whereas if $n\geqslant1$ then
$$f(x)\leqslant\frac{1}{\lambda}\left(\left\lfloor-\frac{\mu}{\lambda-1}\right\rfloor-\mu+(-n)\right),\,\,\,\text{i.e.},\,\,\,f(x)\in I_{-n}\cup I_{-n+1}\cup I_{-n+2}\cup\cdots\cup I_{-1}\cup I_0.$$
It follows that for every $n\in\mathbb{N}$, the sets
$$I_{-n}\cup I_{-n+1}\cup I_{-n+2}\cup\cdots\cup I_{-1}\cup I_0\qquad\text{and}\qquad I_n\cup I_{n-1}\cup I_{n-2}\cup\cdots\cup I_1\cup I_0$$
are invariant under $f^2$. This leads us to the following proposition.\medskip

\begin{proposition}\label{prop7}
Suppose $\left\lfloor-\frac{\mu-1}{\lambda-1}\right\rfloor+1\leqslant\left\lfloor-\frac{\mu}{\lambda-1}\right\rfloor$ and $-1<\lambda<0$. If $x\leqslant\frac{1}{\lambda}\left(\left\lfloor-\frac{\mu}{\lambda-1}\right\rfloor-\mu+1\right)$ or $x>\frac{1}{\lambda}\left(\left\lfloor-\frac{\mu}{\lambda-1}\right\rfloor-\mu\right)$, then $\omega_f(x)$ is either a $2$-cycle or $\left\{\left\lfloor-\frac{\mu}{\lambda-1}\right\rfloor\right\}$.
\end{proposition}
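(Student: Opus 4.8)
The plan is to reproduce the mechanism of the proof of Proposition~\ref{prop6}, the crucial difference being that the $f^2$-invariant sets available here, namely $I_{-n}\cup I_{-n+1}\cup\cdots\cup I_{-1}\cup I_0$ and $I_n\cup I_{n-1}\cup\cdots\cup I_1\cup I_0$, are \emph{bounded} and contain $I_0$, so that orbits can no longer escape to infinity but must instead settle down. First I would note that, since the intervals $\{I_n\}_{n\in\mathbb{Z}}$ partition $\mathbb{R}$ and $I_0=\left(\frac{1}{\lambda}\left(\left\lfloor-\frac{\mu}{\lambda-1}\right\rfloor-\mu+1\right),\frac{1}{\lambda}\left(\left\lfloor-\frac{\mu}{\lambda-1}\right\rfloor-\mu\right)\right]$, the hypothesis of the proposition says exactly that $x\in I_n$ for some $n\in\mathbb{Z}\smallsetminus\{0\}$, the case $x\in I_0$ having been handled by Proposition~\ref{prop5}. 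By the symmetry between the two invariant families I would prove the claim for $n\geqslant 1$ and remark that $n\leqslant-1$ is analogous.

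Writing $p:=\left\lfloor-\frac{\mu}{\lambda-1}\right\rfloor$ for the unique fixed point, I would then track the orbit through the intervals: for each $j\geqslant0$ let $n_j\in\mathbb{Z}$ be the index with $f^{2j}(x)\in I_{n_j}$, so that $n_0=n$. The displayed $f^2$-invariance, read through Lemma~\ref{lemma1}, gives $0\leqslant n_{j+1}\leqslant n_j$ for all $j$; hence $(n_j)_{j\geqslant0}$ is a non-increasing sequence of non-negative integers and is therefore eventually constant, say $n_j=n_\infty$ for all $j\geqslant J$. The whole argument now reduces to the value of $n_\infty$. If $n_\infty=0$, then $f^{2J}(x)\in I_0$, so Lemma~\ref{lemma1} gives $f^{2J+1}(x)=p$, and since $p$ is fixed the orbit is constant thereafter, whence $\omega_f(x)=\{p\}$. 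If $n_\infty\geqslant1$, then $f^{2j}(x)\in I_{n_\infty}$ for every $j\geqslant J$, so Lemma~\ref{lemma1} gives $f^{2j+1}(x)=p+n_\infty$ for every such $j$; the orbit is thus eventually the two-point alternation between $p+n_\infty$ and $f(p+n_\infty)$, and since $p+n_\infty\neq p$ is not fixed these two points are distinct, so $\omega_f(x)$ is a genuine $2$-cycle.

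I expect the only delicate point to be the conversion of boundedness into stabilisation together with the correct reading of the two terminal behaviours: the invariance itself is already in hand, but one must verify that stabilisation at a positive index produces a true $2$-cycle rather than collapsing to a fixed point, which is exactly where the uniqueness of the fixed point in this subcase (guaranteed by the standing hypothesis $\left\lfloor-\frac{\mu-1}{\lambda-1}\right\rfloor+1\leqslant\left\lfloor-\frac{\mu}{\lambda-1}\right\rfloor$) enters. For $n\leqslant-1$ the family $I_{-n}\cup I_{-n+1}\cup\cdots\cup I_{-1}\cup I_0$ is $f^2$-invariant and the corresponding index sequence is non-decreasing and bounded above by $0$, so the identical dichotomy—fixed point when it stabilises at $0$, $2$-cycle otherwise—yields the same conclusion.
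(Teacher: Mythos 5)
Your proposal is correct and takes essentially the same route as the paper's proof: the same partition $\{I_n\}_{n\in\mathbb{Z}}$, the same use of Lemma \ref{lemma1} and of the $f^2$-invariance of $I_n\cup I_{n-1}\cup\cdots\cup I_1\cup I_0$, with your non-increasing, eventually constant index sequence $(n_j)$ being merely a repackaging of the paper's finite strictly increasing sequence $k_1<k_2<\cdots$ of elements of $\{1,\ldots,n-1\}$. If anything, you make explicit a check the paper leaves implicit, namely that stabilisation at $n_\infty\geqslant 1$ produces two genuinely distinct points because $p+n_\infty$ differs from the unique fixed point $p$.
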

\begin{proof}
Suppose $\left\lfloor-\frac{\mu-1}{\lambda-1}\right\rfloor+1\leqslant\left\lfloor-\frac{\mu}{\lambda-1}\right\rfloor$ and $-1<\lambda<0$. Let $x\in I_n$, where $n\in\mathbb{Z}\smallsetminus\{0\}$. Suppose $n\geqslant 1$. Then $f^2(x)\in I_n\cup I_{n-1}\cup I_{n-2}\cup\cdots\cup I_1\cup I_0$. If $f^2(x)\in I_n$ then $f^3(x)=f(x)$, and so $\omega_f(x)$ is a 2-cycle. If $f^2(x)\in I_0$, then $f^2(x)=\left\lfloor-\frac{\mu}{\lambda-1}\right\rfloor$, and so $\omega_f(x)=\left\{\left\lfloor-\frac{\mu}{\lambda-1}\right\rfloor\right\}$. Otherwise, there exists a unique $k_1\in\{1,\ldots,n-1\}$ such that $f^2(x)\in I_{n-k_1}$. Then $f^4(x)\in I_{n-k_1}\cup I_{n-k_1-1}\cup I_{n-k_1-2}\cup\cdots\cup I_1\cup I_0$. If $f^4(x)\in I_{n-k_1}$, then $f^5(x)=f^3(x)$, and so $\omega_f(x)$ is a 2-cycle. If $f^4(x)\in I_0$, then $f^4(x)=\left\lfloor-\frac{\mu}{\lambda-1}\right\rfloor$, and so $\omega_f(x)=\left\{\left\lfloor-\frac{\mu}{\lambda-1}\right\rfloor\right\}$. Otherwise, there exists a unique $k_2\in\{1,\ldots,n-1\}$ with $k_1<k_2$ such that $f^4(x)\in I_{n-k_2}$. Then $f^6(x)\in I_{n-k_2}\cup I_{n-k_2-1}\cup I_{n-k_2-2}\cup\cdots\cup I_1\cup I_0$. Continuing this, since the sequence $k_1<k_2<k_3<\cdots$ of elements of $\{1,\ldots,n-1\}$ must be finite, the process terminates with the conclusion of either $\omega_f(x)$ being a 2-cycle or $\omega_f(x)=\left\{\left\lfloor-\frac{\mu}{\lambda-1}\right\rfloor\right\}$. The case $n\geqslant-1$ is analogous.
\end{proof}\medskip

\subsubsection{The subcase $\left\lfloor-\frac{\mu-1}{\lambda-1}\right\rfloor+1>\left\lfloor-\frac{\mu}{\lambda-1}\right\rfloor$}

In this subcase, we have $\left|\Fix(f)\right|=0$, and
\begin{equation}\label{eq:equality}
\left\lfloor-\frac{\mu-1}{\lambda-1}\right\rfloor=\left\lfloor-\frac{\mu}{\lambda-1}\right\rfloor.
\end{equation}
Indeed, if $\left\lfloor-\frac{\mu-1}{\lambda-1}\right\rfloor>\left\lfloor-\frac{\mu}{\lambda-1}\right\rfloor$ then $-\frac{\mu-1}{\lambda-1}>-\frac{\mu}{\lambda-1}$, which is a contradiction since $\lambda<0$. Using \eqref{eq:equality}, one shows that
\begin{equation}\label{eq:ineqsecondcase}
\frac{1}{\lambda}\left(\left\lfloor-\frac{\mu}{\lambda-1}\right\rfloor-\mu+1\right)-1<\left\lfloor-\frac{\mu}{\lambda-1}\right\rfloor\leqslant\frac{1}{\lambda}\left(\left\lfloor-\frac{\mu}{\lambda-1}\right\rfloor-\mu+1\right).
\end{equation}

Now let $x\in\mathbb{R}$. Suppose that $x\in I_n$, where $n\in\mathbb{Z}$. Then Lemma \ref{lemma1} and \eqref{eq:ineqsecondcase} imply
$$\frac{1}{\lambda}\left(\left\lfloor-\frac{\mu}{\lambda-1}\right\rfloor-\mu+1\right)-1+n<f(x)\leqslant\frac{1}{\lambda}\left(\left\lfloor-\frac{\mu}{\lambda-1}\right\rfloor-\mu+1\right)+n.$$\smallskip

\paragraph{\textit{The subsubcase $\lambda\leqslant-1$}} In this subsubcase, if $n\leqslant0$ then $$f(x)\leqslant\frac{1}{\lambda}\left(\left\lfloor-\frac{\mu}{\lambda-1}\right\rfloor-\mu+(-n+1)\right),\quad\text{i.e.},\quad f(x)\in I_{-n+1}\cup I_{-n+2}\cup I_{-n+3}\cup\cdots,$$
whereas if $n\geqslant1$ then $$f(x)>\frac{1}{\lambda}\left(\left\lfloor-\frac{\mu}{\lambda-1}\right\rfloor-\mu+(-n+1)+1\right),\quad\text{i.e.},\quad f(x)\in I_{-n+1}\cup I_{-n}\cup I_{-n-1}\cup\cdots.$$
It follows that for every $n\in\mathbb{N}$, the sets
$$I_{-n+1}\cup I_{-n}\cup I_{-n-1}\cup\cdots\qquad\text{and}\qquad I_{n}\cup I_{n+1}\cup I_{n+2}\cup\cdots$$
are invariant under $f^2$. This leads us to the following proposition, which can be proved in the same way as Proposition \ref{prop6}.\medskip

\begin{proposition}\label{prop8}
Suppose $\left\lfloor-\frac{\mu-1}{\lambda-1}\right\rfloor+1>\left\lfloor-\frac{\mu}{\lambda-1}\right\rfloor$. If $\lambda\leqslant-1$, then for every $x\in\mathbb{R}$, $\omega_f(x)$ is either a $2$-cycle or $\{-\infty,\infty\}$.
\end{proposition}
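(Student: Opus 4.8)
The plan is to follow the argument of Proposition~\ref{prop6} almost verbatim, the only structural change being that the role played there by the fixed point $\lfloor-\frac{\mu}{\lambda-1}\rfloor$ (and by the special interval $I_0$) now disappears, since in the present subcase $\Fix(f)=\varnothing$. The two ingredients I would rely on are Lemma~\ref{lemma1}, which tells me that $f$ is constant on each $I_n$ with value $\lfloor-\frac{\mu}{\lambda-1}\rfloor+n$, and the $f^2$-invariance of the two half-infinite unions $I_n\cup I_{n+1}\cup\cdots$ and $I_{-n+1}\cup I_{-n}\cup I_{-n-1}\cup\cdots$ just established for this subsubcase. Given $x\in\mathbb{R}$, it lies in a unique $I_n$, and I would split into the symmetric cases $n\geqslant 1$ and $n\leqslant 0$, treating only the former in detail.

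First I would run the same escape-or-return iteration. Since $I_n\cup I_{n+1}\cup\cdots$ is $f^2$-invariant, $f^2(x)\in I_{n+k_1}$ for some integer $k_1\geqslant 0$. If $k_1=0$, then $f^2(x)$ and $x$ lie in the same interval $I_n$, so $f^3(x)=f(x)$ by Lemma~\ref{lemma1}, and the orbit is eventually $2$-periodic. If $k_1\geqslant 1$, I would repeat with $f^2(x)$ in place of $x$, obtaining $f^4(x)\in I_{n+k_2}$ with $k_2\geqslant k_1$, and so on: at each stage, equality of two consecutive indices forces a return $f^{m+2}(x)=f^{m}(x)$ and hence eventual $2$-periodicity, while a strict increase pushes the iterate into a strictly higher-indexed interval.

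The resulting dichotomy would then read: either the process terminates at some finite stage and $\omega_f(x)$ is a $2$-cycle, or it produces a strictly increasing sequence $k_1<k_2<\cdots$ of positive integers with $f^{2m}(x)\in I_{n+k_m}$. In the latter case I would invoke $\tfrac{1}{\lambda}<0$, so that the interval $I_j$ recedes to $-\infty$ as $j\to\infty$; hence $f^{2m}(x)\to-\infty$, while by Lemma~\ref{lemma1} the interleaving iterates satisfy $f^{2m+1}(x)=\lfloor-\frac{\mu}{\lambda-1}\rfloor+(n+k_m)\to+\infty$, giving $\omega_f(x)=\{-\infty,\infty\}$. The case $n\leqslant 0$ is identical after replacing the upper tail $I_n\cup I_{n+1}\cup\cdots$ by the lower tail $I_n\cup I_{n-1}\cup I_{n-2}\cup\cdots$, with the even iterates now escaping to $+\infty$ and the odd ones to $-\infty$.

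The one point requiring genuine care, rather than a mechanical transcription of Proposition~\ref{prop6}, is verifying that a return produces a \emph{$2$-cycle} and not a fixed point. This is exactly where the hypothesis $\lfloor-\frac{\mu-1}{\lambda-1}\rfloor+1>\lfloor-\frac{\mu}{\lambda-1}\rfloor$, equivalently $\Fix(f)=\varnothing$, enters: if a return yielded $f(a)=a$ for some point $a$ on the orbit, then $a$ would be a fixed point, contradicting the hypothesis, so every return is necessarily a nondegenerate $2$-cycle. Apart from this observation and the sign bookkeeping for $\tfrac{1}{\lambda}$ that fixes the escape directions, the proof is a direct copy of the earlier argument.
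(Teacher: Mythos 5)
Your proposal is correct and is essentially the paper's own argument: the paper proves Proposition~\ref{prop8} precisely by invoking the $f^2$-invariance of the tails $I_n\cup I_{n+1}\cup\cdots$ and $I_{-n+1}\cup I_{-n}\cup\cdots$ established just before it and repeating the escape-or-return iteration of Proposition~\ref{prop6}, exactly as you do. Your added remark that $\Fix(f)=\varnothing$ (or, equivalently, that $f(x)$ and $f^2(x)$ lie in distinct intervals of the partition) rules out a degenerate return is a point the paper leaves implicit, and it is a correct and welcome clarification.
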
\medskip

\paragraph{\textit{The subsubcase $-1<\lambda<0$}} In this subsubcase, if $n\leqslant0$ then $$f(x)>\frac{1}{\lambda}\left(\left\lfloor-\frac{\mu}{\lambda-1}\right\rfloor-\mu+(-n+1)+1\right),\,\,\,\text{i.e.},\,\,\,f(x)\in I_{-n+1}\cup I_{-n}\cup I_{-n-1}\cup\cdots\cup I_2\cup I_1,$$
whereas if $n\geqslant1$ then
$$f(x)\leqslant\frac{1}{\lambda}\left(\left\lfloor-\frac{\mu}{\lambda-1}\right\rfloor-\mu+(-n+1)\right),\,\,\,\text{i.e.},\,\,\,f(x)\in I_{-n+1}\cup I_{-n+2}\cup I_{-n+3}\cup\cdots\cup I_{-1}\cup I_0.$$
It follows that for every $n\in\mathbb{N}$, the sets
$$I_{-n+1}\cup I_{-n+2}\cup I_{-n+3}\cup\cdots\cup I_{-1}\cup I_0\qquad\text{and}\qquad I_n\cup I_{n-1}\cup I_{n-2}\cup\cdots\cup I_2\cup I_1$$
are invariant under $f^2$. This leads us to the following proposition, which can be proved in the same way as Proposition \ref{prop7}.\medskip

\begin{proposition}\label{prop9}
Suppose $\left\lfloor-\frac{\mu-1}{\lambda-1}\right\rfloor+1>\left\lfloor-\frac{\mu}{\lambda-1}\right\rfloor$. If $-1<\lambda<0$, then for every $x\in\mathbb{R}$, $\omega_f(x)$ is a $2$-cycle.
\end{proposition}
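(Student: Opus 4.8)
The plan is to mirror the proof of Proposition \ref{prop7}, relying on the invariance under $f^2$ of the finite interval-unions just established, but replacing the ``singleton'' alternative by a genuine $2$-cycle, since the standing hypothesis $\left\lfloor-\frac{\mu-1}{\lambda-1}\right\rfloor+1>\left\lfloor-\frac{\mu}{\lambda-1}\right\rfloor$ forces $\Fix(f)=\varnothing$. Writing $c:=\left\lfloor-\frac{\mu}{\lambda-1}\right\rfloor$, Lemma \ref{lemma1} tells us that $f$ is constant, with value $c+m$, on each subinterval $I_m$. Given $x\in\mathbb{R}$, I would locate it in the unique $I_n$ containing it and follow the indices $m_j\in\mathbb{Z}$ of the even iterates, defined by $f^{2j}(x)\in I_{m_j}$ with $m_0=n$.

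Suppose first that $n\geqslant 1$. Since $I_1\cup\cdots\cup I_n$ is invariant under $f^2$, every $m_j$ lies in $\{1,\ldots,n\}$, and applying the invariance to successively smaller unions shows that $(m_j)$ is non-increasing. If $m_1=n$, then $x$ and $f^2(x)$ lie in a common subinterval, so $f(x)=f\bigl(f^2(x)\bigr)=f^3(x)$; otherwise $f^2(x)\in I_{n-k_1}$ for a unique $k_1\in\{1,\ldots,n-1\}$, and iterating produces a strictly increasing sequence $k_1<k_2<\cdots$ inside the finite set $\{1,\ldots,n-1\}$, which must terminate. Termination means that $f^{2j}(x)$ and $f^{2j+2}(x)$ eventually share a subinterval, whence $f^{2j+1}(x)=f^{2j+3}(x)$. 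The case $n\leqslant 0$ is symmetric: there the relevant invariant set is the appropriate finite union of subintervals terminating at $I_0$, the indices $m_j$ are non-decreasing and bounded above by $0$, and the same argument again produces two consecutive even iterates in a common subinterval.

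In every case I arrive at a point $y:=f^{2j+1}(x)$ with $f^2(y)=y$. Since $\Fix(f)=\varnothing$, we cannot have $f(y)=y$, so $\{y,f(y)\}$ is a genuine $2$-cycle and $\omega_f(x)=\{y,f(y)\}$. The step I expect to require the most care is the confinement argument: it is precisely the invariance of the finite interval-unions under $f^2$ that traps the even iterates within a bounded range of indices, forcing the monotone sequence $(m_j)$ (equivalently, the strictly monotone auxiliary sequence $k_1<k_2<\cdots$) to stabilise after finitely many steps. The only structural change from Proposition \ref{prop7} is that, with $I_0$ now carrying no fixed point, the branch that there terminated at the singleton $\left\{\left\lfloor-\frac{\mu}{\lambda-1}\right\rfloor\right\}$ is here absorbed into the $2$-cycle conclusion, so that the dichotomy of Proposition \ref{prop7} collapses to the single claimed outcome.
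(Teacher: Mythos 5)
Your proof is correct and takes essentially the same route as the paper, which proves Proposition \ref{prop9} simply by invoking the method of Proposition \ref{prop7}: the invariance under $f^2$ of the finite unions $I_n\cup\cdots\cup I_1$ (for $n\geqslant 1$) and $I_n\cup\cdots\cup I_0$ (for $n\leqslant 0$) traps the even iterates, the monotone index sequence must stabilise, and constancy of $f$ on each $I_m$ (Lemma \ref{lemma1}) then yields $f^{2j+1}(x)=f^{2j+3}(x)$. Your explicit observation that $\Fix(f)=\varnothing$ here absorbs the singleton branch of Proposition \ref{prop7} into the $2$-cycle conclusion is exactly the intended adaptation.
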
\medskip

\noindent This completes our proof of Theorem \ref{thm:omegalimit}.

\section*{Disclosure statement}

No potential conflict of interest was reported by the author.


\end{document}